\numberwithin{equation}{section}
\theoremstyle{mytheoremstyle}
\newtheorem{thm}{Theorem}[section]
\newtheorem*{thm*}{Theorem}
\newtheorem{lem}[thm]{Lemma}
\newtheorem*{lem*}{Lemma}
\newtheorem{prop}[thm]{Proposition}
\newtheorem*{prop*}{Proposition}
\newtheorem*{cor}{Corollary}
\theoremstyle{definition}
\theoremstyle{remark}
\newtheorem*{rmk}{Remark}
\renewcommand{\vec}[1]{\mathbf{#1}}
\newcommand{\R}{\mathbb{R}}
\renewcommand{\P}{\mathbb{P}}
\newcommand{\E}{\mathbb{E}}
\title{Invariant measures for the Stochastic Navier-Stokes equation on a 2D rotating sphere with stable L\'evy noise}
\author{Leanne Dong}
\date{
School of Mathematics and Statistics, The University of Sydney.\,\,	\today
}
\begin{document}
	\maketitle
\begin{abstract}
Building upon the well-posedness results in \cite{snse1}, in this note we prove the existence of invariant measures for the stochastic Navier-Stokes equations with stable L\'evy noise. The crux of our proof relies on the assumption of finite dimensional L\'evy noise.
\end{abstract}	

\section{Invariant measures}\label{sec:im}
    
In this note we are concerned with the existence of an invariant measure of the solution $u$ to the abstract equation (\ref{asnse4}). Let $A$ be the Stokes operator in $H$. Let $B$ is the bilinear operator. Let $C$ be the Coriolis operator in $H$ and $G$ is a bounded linear operator. (See \cite{snse1} for rigorous definitions of these operators)
\begin{align}\label{asnse4}
	du(t)+Au(t)+B(u(t),u(t))+\mathbf{C}u=fdt+GdL(t),\quad u(0)=u_0,
\end{align}

In our earlier work \cite{snse1} we proved 
\begin{itemize}
	\item Theorem \ref{t1u} on the existence and uniqueness of a weak solution.
	\item Theorem \ref{t2u} on the continuous dependence on initial data.
	\item Theorem \ref{t4} on the existence of a strong solution.
\end{itemize}

\begin{thm}\label{t1u}
	Suppose that $\alpha\ge 0$, $z\in L^4_{\text{loc}}([0,\infty);\mathbb{L}^4(\mathbb{S}^2)\cap H)$, $ v_0\in H$ and $ f\in V'$. Then there exists $\P$-a.s. a unique solution $ u\in D([0,\infty);H)\cap L^2_{\text{loc}}([0,\infty);V)$ of equation (\ref{asnse4}). In particular, if 
	\[\sum_{l=1}^\infty|\sigma_l|^{\beta}\lambda^{\beta/2}_l<\infty\,,\]
	then the theorem holds. 
\end{thm}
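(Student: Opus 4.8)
The plan is to strip off the stochastic forcing by the classical change of unknown and then to solve the resulting pathwise Navier--Stokes system on $\mathbb{S}^2$ by a Faedo--Galerkin scheme, the $2$D structure making uniqueness automatic. The randomness enters only through the auxiliary Ornstein--Uhlenbeck-type process $z$ attached to the noise, namely the solution of the linear part of (\ref{asnse4}) driven by $G\,dL$, which in the eigenbasis $\{e_l\}$ of $A$ (eigenvalues $\lambda_l$) reads $z(t)=\sum_l\sigma_l\big(\int_0^t e^{-(\alpha+\lambda_l)(t-s)}\,dL_l(s)\big)e_l$. By hypothesis $z\in L^4_{\text{loc}}([0,\infty);\mathbb{L}^4(\mathbb{S}^2)\cap H)$ $\P$-a.s.\ and $z\in D([0,\infty);H)$; the last sentence of the theorem is the separate claim that $\sum_l|\sigma_l|^\beta\lambda_l^{\beta/2}<\infty$ forces $z$ into that space $\P$-a.s., which one verifies by a fractional-moment estimate for the $\beta$-stable stochastic convolution together with the $2$D Sobolev embedding $D(A^{1/2})=V\hookrightarrow\mathbb{L}^4(\mathbb{S}^2)$ and the smoothness of the Stokes eigenfunctions on $\mathbb{S}^2$. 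From here on we fix $\omega$ and such a path $z$.

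Setting $v:=u-z$, the $G\,dL$ and $Az\,dt$ (and $\alpha z\,dt$) contributions cancel, so $v$ solves the pathwise deterministic equation
\[
\frac{dv}{dt}+Av+B(v+z,v+z)+\mathbf{C}(v+z)=f,\qquad v(0)=v_0,
\]
and it suffices to produce a unique $v\in C([0,\infty);H)\cap L^2_{\text{loc}}([0,\infty);V)$, since then $u=v+z\in D([0,\infty);H)\cap L^2_{\text{loc}}([0,\infty);V)$ with the jumps of $u$ being precisely those of $z$. For existence I would project onto $H_n=\mathrm{span}\{e_1,\dots,e_n\}$, solve the resulting ODE for $v_n$, and test with $v_n$; the cancellations $\langle B(a,b),b\rangle=0$ (for divergence-free $a$) and $\langle\mathbf{C}b,b\rangle=0$ leave only $\langle B(v_n+z,v_n+z),v_n\rangle=-\langle B(v_n+z,v_n),z\rangle$, $\langle\mathbf{C}z,v_n\rangle$ and $\langle f,v_n\rangle$, and estimating these by $\|w\|_{\mathbb{L}^4}\le C\|w\|_V$, the $2$D Ladyzhenskaya inequality $\|w\|_{\mathbb{L}^4}^2\le C\|w\|_H\|w\|_V$, boundedness of $\mathbf{C}$ on $H$, and Young's inequality gives
\[
\frac{d}{dt}\|v_n\|_H^2+\|v_n\|_V^2\le c\big(1+\|z(t)\|_{\mathbb{L}^4}^4+\|z(t)\|_H^2+\|f\|_{V'}^2\big)\big(1+\|v_n\|_H^2\big).
\]
Since the bracket lies in $L^1_{\text{loc}}$, Gronwall yields $n$-uniform bounds for $v_n$ in $L^\infty_{\text{loc}}(H)\cap L^2_{\text{loc}}(V)$, and then the equation gives an $n$-uniform bound for $\tfrac{d}{dt}v_n$ in $L^2_{\text{loc}}(V')$.

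By Banach--Alaoglu and the Aubin--Lions lemma (via the compact embedding $V\hookrightarrow H$) a subsequence of $(v_n)$ converges weakly-$\ast$ in $L^\infty_{\text{loc}}(H)$, weakly in $L^2_{\text{loc}}(V)$ and strongly in $L^2_{\text{loc}}(H)$ to some $v$; the strong $L^2_{\text{loc}}(H)$ convergence lets one pass to the limit in $B(v_n+z,v_n+z)$, the remaining terms being linear, so $v$ solves the equation, and $v\in C([0,\infty);H)$ follows from $v\in L^2_{\text{loc}}(V)$ and $\tfrac{d}{dt}v\in L^2_{\text{loc}}(V')$ by the usual interpolation lemma. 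For uniqueness, with $w=v^1-v^2$ I would test the difference equation with $w$ and use
\[
\langle B(v^1+z,v^1+z)-B(v^2+z,v^2+z),w\rangle=\langle B(w,v^2),w\rangle+\langle B(w,z),w\rangle
\]
together with the Ladyzhenskaya inequality and $\langle B(w,z),w\rangle=-\langle B(w,w),z\rangle$ to get $\tfrac{d}{dt}\|w\|_H^2+\|w\|_V^2\le c\big(\|v^2(t)\|_V^2+\|z(t)\|_{\mathbb{L}^4}^4\big)\|w\|_H^2$; since $w(0)=0$ and the coefficient is in $L^1_{\text{loc}}$, Gronwall forces $w\equiv0$.

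The main obstacle, and what dictates the whole layout, is the poor regularity of the driving term: a $\beta$-stable noise makes $z$ only an element of $D([0,\infty);H)$ with the mere spatial regularity $\mathbb{L}^4(\mathbb{S}^2)\cap H$, not $V$. The argument must therefore be organised so that in every occurrence of $B$ the rough factor $z$ is the one that is integrated by parts or measured in $\mathbb{L}^4$, which is exactly what makes the $L^4_{\text{loc}}(\mathbb{L}^4)$ bound on $z$ (and, for the last assertion, the condition $\sum_l|\sigma_l|^\beta\lambda_l^{\beta/2}<\infty$) sufficient; with that in place the remaining estimates are those of the classical deterministic $2$D Navier--Stokes theory on a compact surface, where global existence and uniqueness are standard.
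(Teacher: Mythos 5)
The paper itself does not prove Theorem \ref{t1u}: it is imported verbatim from \cite{snse1}, so there is no in-text proof to compare against. That said, your proposal --- subtract the stationary Ornstein--Uhlenbeck convolution $z$, solve the resulting pathwise $2$D Navier--Stokes system for $v=u-z$ by Galerkin with the antisymmetry of $b$ and $\mathbf{C}$, Ladyzhenskaya and Young estimates, Aubin--Lions compactness, and Gronwall for uniqueness, and read the condition $\sum_l|\sigma_l|^\beta\lambda_l^{\beta/2}<\infty$ as producing a c\`adl\`ag $V$-valued (hence $L^4_{\text{loc}}(\mathbb{L}^4)$) version of $z$ --- is precisely the mechanism this paper leans on later (the auxiliary OU process, Lemma 3.1 of \cite{snse1}, and the remark following \eqref{eq_v1}), and your estimates are correct. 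One small slip: for $\alpha>0$ the term $\alpha z\,dt$ does not cancel in the change of variables but survives as a source term $+\alpha z$ on the right-hand side of the $v$-equation; since $z\in L^2_{\text{loc}}([0,\infty);H)$ this is harmless for every bound you run, but the equation for $v$ should carry it.
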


\begin{thm}\label{t2u}
Assume that,
\begin{equation*}
	 u^0_n\to  u\quad\text{in}\quad H
\end{equation*}
and for some $T>0$,
\begin{align}
	 z_n\to  z\quad\text{in}\quad L^4([0,T];\mathbb{L}^4(\mathbb{S}^2)\cap H)\qquad  f_n\to  f\quad\text{in}\quad L^2(0,T;V').
\end{align}
\end{thm}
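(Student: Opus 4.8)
The plan is to pass to the pathwise formulation used in \cite{snse1} and run an $H$-energy estimate on the difference of two solutions, closed by Gronwall's lemma. First I would recall that, writing $v = u - z$, equation (\ref{asnse4}) becomes the deterministic evolution equation
\begin{equation*}
	v' + Av + B(v+z,\, v+z) + \mathbf{C}(v+z) = f
\end{equation*}
(up to the bounded lower-order terms the transformation of \cite{snse1} introduces), whose solution furnished by Theorem~\ref{t1u} satisfies $v \in C([0,T];H)\cap L^2(0,T;V)$, $u = v+z$ and $v(0)=u_0$. Let $v_n$ (resp.\ $v$) be the solution associated with the data $(u^0_n, z_n, f_n)$ (resp.\ $(u^0,z,f)$) and set $w_n := v_n - v$. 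Subtracting the equations, $w_n$ solves
\begin{equation*}
	w_n' + Aw_n + \big[B(v_n+z_n,\,v_n+z_n) - B(v+z,\,v+z)\big] + \mathbf{C}w_n + \mathbf{C}(z_n - z) = f_n - f,
\end{equation*}
with $w_n(0) = u^0_n - u^0 \to 0$ in $H$. The goal is to prove $\sup_{[0,T]}\|w_n(t)\| \to 0$ and $\int_0^T\|w_n\|_V^2\,dt \to 0$.

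Testing against $w_n$ and using $\langle \mathbf{C}w_n, w_n\rangle = 0$ gives, with the trilinear form $b(a,b,c):=\langle B(a,b),c\rangle$,
\begin{equation*}
	\tfrac12\tfrac{d}{dt}\|w_n\|^2 + \|w_n\|_V^2 = -\big[\,b(v_n+z_n,v_n+z_n,w_n) - b(v+z,v+z,w_n)\,\big] - \langle \mathbf{C}(z_n - z), w_n\rangle + \langle f_n - f, w_n\rangle.
\end{equation*}
Expanding the difference of trilinear forms multilinearly in $v_n, v, z_n, z$ and regrouping, the two terms $b(v,w_n,w_n)$ and $b(z,w_n,w_n)$ vanish by the cancellation $b(a,c,c)=0$ valid for divergence-free $a$. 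For each surviving term I would arrange, via $b(a,b,c) = -b(a,c,b)$ where needed, that $z_n$, $z$ or $z_n-z$ never occupies the gradient slot, and then estimate it by the 2D Ladyzhenskaya inequality on $\mathbb{S}^2$, $\|\varphi\|_{\mathbb{L}^4}^2 \le C\|\varphi\|\,\|\varphi\|_V$, together with $|b(a,b,c)| \le C\|a\|_{\mathbb{L}^4}\|b\|_V\|c\|_{\mathbb{L}^4}$ and Young's inequality, choosing the constants so that all the $\|w_n\|_V$ factors produced are absorbed into $\tfrac12\|w_n\|_V^2$ on the left. The Coriolis cross term is handled by the boundedness of $\mathbf{C}$ on $H$, and $|\langle f_n-f,w_n\rangle| \le \varepsilon\|w_n\|_V^2 + C\|f_n-f\|_{V'}^2$.

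The upshot is a differential inequality $\tfrac{d}{dt}\|w_n\|^2 + \|w_n\|_V^2 \le g_n(t)\,\|w_n\|^2 + R_n(t)$ with $g_n = C\big(1 + \|v_n\|_V^2 + \|z_n\|_{\mathbb{L}^4}^4\big)$ and $R_n$ equal to a constant times
\begin{equation*}
	\big(1 + \|v\|_{\mathbb{L}^4}^2 + \|v_n\|_{\mathbb{L}^4}^2 + \|z\|_{\mathbb{L}^4}^2 + \|z_n\|_{\mathbb{L}^4}^2\big)\,\|z_n - z\|_{\mathbb{L}^4}^2 + \|z_n - z\|^2 + \|f_n - f\|_{V'}^2.
\end{equation*}
Here I invoke the a priori bound behind Theorem~\ref{t1u}: $\|v_n\|_{L^\infty(0,T;H)} + \|v_n\|_{L^2(0,T;V)}$ is controlled by $\|u^0_n\|$ and $\|z_n\|_{L^4(0,T;\mathbb{L}^4(\mathbb{S}^2)\cap H)}$, both bounded along the sequence, so $\int_0^T g_n\,dt \le M$ uniformly in $n$, and by Ladyzhenskaya $\|v_n\|_{L^4(0,T;\mathbb{L}^4)}$ and $\|v\|_{L^4(0,T;\mathbb{L}^4)}$ are uniformly bounded. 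Cauchy--Schwarz in $t$ then gives $\int_0^T R_n\,dt \to 0$, using $z_n \to z$ in $L^4(0,T;\mathbb{L}^4(\mathbb{S}^2)\cap H)$ (which also yields $z_n \to z$ in $L^2(0,T;H)$) and $f_n\to f$ in $L^2(0,T;V')$. Gronwall's lemma with $\|w_n(0)\|\to 0$ gives $\sup_{[0,T]}\|w_n(t)\|^2 \le \big(\|u^0_n-u^0\|^2 + \int_0^T R_n\big)e^M \to 0$, and integrating the inequality gives $\int_0^T\|w_n\|_V^2 \to 0$. Hence $v_n \to v$ in $C([0,T];H)\cap L^2(0,T;V)$, and since $z_n\to z$ in $L^4(0,T;\mathbb{L}^4(\mathbb{S}^2)\cap H)$ it follows (by Ladyzhenskaya interpolation for the $v$-part) that $u_n = v_n+z_n \to u$ in $L^4(0,T;\mathbb{L}^4(\mathbb{S}^2)\cap H)\cap L^2(0,T;V)$.

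I expect the main obstacle to be the nonlinear estimates under the weak spatial regularity of $z$: since $z$ lies only in $\mathbb{L}^4(\mathbb{S}^2)$, it cannot be placed in the gradient slot of $b$, so every surviving trilinear term must be made to close by a judicious use of $b(a,c,c)=0$ and the borderline two-dimensional embedding $V \hookrightarrow \mathbb{L}^4(\mathbb{S}^2)$, with all Young constants chosen so the $\|w_n\|_V^2$ contributions really are absorbed. A secondary point is to check that the energy bound underpinning Theorem~\ref{t1u} is uniform along the approximating sequence, since this is precisely what keeps the Gronwall exponent $M$ finite.
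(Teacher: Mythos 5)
There is nothing in this paper to compare your proposal against: Theorem~\ref{t2u} is only \emph{recalled} here from the earlier work \cite{snse1} and is given no proof in this manuscript --- indeed, as printed, the statement consists solely of hypotheses and is missing its conclusion, so you have had to guess what is being asserted (your guess, convergence of $u_n$ to $u$ in $D([0,T];H)\cap L^2(0,T;V)$ and in $L^4(0,T;\mathbb{L}^4(\mathbb{S}^2)\cap H)$, is the natural and surely intended one). On its own terms your plan is the standard, and essentially correct, route for such a continuous-dependence result: pass to the pathwise equation for $v=u-z$, write the equation for $w_n=v_n-v$, test with $w_n$, kill $b(\cdot,w_n,w_n)$ by the divergence-free cancellation, move $z_n$, $z$, $z_n-z$ out of the gradient slot by antisymmetry, close with the Ladyzhenskaya interpolation $|\varphi|_{\mathbb{L}^4}^2\le C|\varphi|\,|\varphi|_V$ and Young, and apply Gronwall with an exponent $\int_0^T g_n\,dt$ kept uniformly finite by the a priori bounds underlying Theorem~\ref{t1u}. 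The two points you flag as delicate are exactly the right ones, and I would add only one more: the energy identity $\tfrac12\tfrac{d}{dt}|w_n|^2=\langle w_n',w_n\rangle$ must be justified for weak solutions, which works here because in two dimensions $B(u_n,u_n)\in L^2(0,T;V')$, so $w_n\in L^2(0,T;V)$ with $w_n'\in L^2(0,T;V')$ and the Lions--Magenes lemma applies. As written your text is a programme rather than a proof (several estimates are described rather than carried out), but the skeleton is sound and matches what the proof in \cite{snse1} must do.
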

\begin{thm}\label{t4}
	Assume that $\alpha\ge 0$, $z\in L^4_{\text{loc}}([0,\infty);\mathbb{L}^4(\mathbb{S}^2)\cap H)$, $ f\in H$ and $ v_0\in H.$ Then, there exists  $\P$-a.s. unique solution of (\ref{asnse4}) in the space $D(0,T;H)\cap L^2(0,T;V).$ which belongs to $D(\epsilon,T; V)\cap L^2_{\text{loc}}(\epsilon,T;D(A))$ for all $\epsilon>0.$ and $T>0.$ Moreover, if $ v_0\in V$, then $ u\in D(0,T; V)\cap L^2_{\text{loc}}(0,T;D(A))$ for all $T>0$, $\omega\in\Omega$. Moreover, if
	\[\sum_{l=1}^\infty|\sigma_l|^{\beta}\lambda^{\beta/2}_l<\infty\,,\]
	then the theorem holds.
\end{thm}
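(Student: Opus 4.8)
The plan is to obtain the strong solution by upgrading, via parabolic smoothing, the weak solution furnished by Theorem~\ref{t1u}, and to carry this out pathwise. First I would subtract off the stochastic part: let $z$ solve the associated linear Ornstein--Uhlenbeck-type equation $dz+Az\,dt+\alpha z\,dt=G\,dL(t)$, so that under the summability hypothesis $\sum_{l}|\sigma_l|^{\beta}\lambda_l^{\beta/2}<\infty$ one has $z\in L^4_{\text{loc}}([0,\infty);\mathbb{L}^4(\mathbb{S}^2)\cap H)$ $\P$-a.s.; this is exactly the standing assumption of the statement. The difference $v=u-z$ then satisfies, for a.e.\ $\omega$, the deterministic Navier--Stokes-type equation $\partial_t v+Av+B(v+z,v+z)+\mathbf{C}(v+z)=f+\alpha z$ with $v(0)=v_0\in H$, and it suffices to establish the asserted regularity for $v$: since $u=v+z$, continuity of $v$ in $H$ together with the c\`adl\`ag paths of the finite-dimensional L\'evy process gives $u\in D([0,\infty);H)$, and the remaining smoothing statements transfer verbatim because $z$ already lies in $\mathbb{L}^4\cap H$.

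The heart of the argument is a hierarchy of a priori bounds on the Galerkin truncations $v_n=\sum_{k\le n}(v,e_k)e_k$, where $\{e_k\}$ is the eigenbasis of $A$. Pairing with $v_n$ and using $\langle B(w,w),w\rangle=0$ and the boundedness (skew-symmetry) of $\mathbf{C}$ on $H$ yields the basic bound in $L^\infty(0,T;H)\cap L^2(0,T;V)$, the terms $B(z,v_n),\,B(v_n,z),\,B(z,z)$ being absorbed by the $\mathbb{L}^4$-regularity of $z$ and the interpolation inequality $\|w\|_{\mathbb{L}^4}^2\lesssim\|w\|_H\|w\|_V$ on $\mathbb{S}^2$. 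Pairing instead with $Av_n$ and controlling $|\langle B(v_n+z,v_n+z),Av_n\rangle|$ by $\tfrac14\|Av_n\|_H^2$ plus lower-order terms --- via the 2D estimate $|\langle B(a,a),Aa\rangle|\lesssim\|a\|_H^{1/2}\|a\|_V\|Aa\|_H^{3/2}$ and Young's inequality, with the $z$-contributions handled through $\|z\|_{\mathbb{L}^4}$ --- gives, when $v_0\in V$, a Gronwall inequality for $\|v_n\|_V^2$ and hence the uniform bound $v_n$ bounded in $D(0,T;V)\cap L^2(0,T;D(A))$ for every $\omega$. For general $v_0\in H$ I would multiply this last differential inequality by $t$, integrate, and use $\int_0^T\|v_n\|_V^2\,dt<\infty$ from the basic estimate to bound $t\,\|v_n(t)\|_V^2$; this places $v(\epsilon)\in V$ for every $\epsilon>0$, and restarting the $v_0\in V$ argument from time $\epsilon$ delivers $v\in D(\epsilon,T;V)\cap L^2_{\text{loc}}(\epsilon,T;D(A))$.

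With these $n$-uniform bounds I would extract a subsequence converging weakly-$*$ in $L^\infty(\epsilon,T;V)$ and weakly in $L^2(\epsilon,T;D(A))$, and --- by the Aubin--Lions--Simon lemma, using the compact embedding $D(A)\hookrightarrow V\hookrightarrow H$ --- strongly in $L^2(\epsilon,T;V)$; the strong convergence is what is needed to pass to the limit in $B(v_n+z,v_n+z)$. The limit $v$ solves the equation strongly on $(\epsilon,T)$ for each $\epsilon>0$ and, by the uniqueness in Theorem~\ref{t1u}, coincides with the weak solution, so $u=v+z$ is the strong solution claimed. I expect the main obstacle to be precisely the $D(A)$-level estimate under the rough inputs $v_0\in H$ and $z\in\mathbb{L}^4$: one must close the inequality for $\langle B(v_n+z,v_n+z),Av_n\rangle$ without any a priori $V$-control, which forces a careful marriage of the parabolic gain (the explicit factor of $t$) with sharp 2D interpolation, and one has to verify that every constant is a deterministic, locally bounded function of $\|z\|_{L^4_t\mathbb{L}^4_x}$ so that the whole construction is legitimate for a.e.\ fixed $\omega$.
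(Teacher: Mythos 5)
This paper does not actually prove Theorem \ref{t4}: it is imported verbatim from the earlier work \cite{snse1} (``In our earlier work \cite{snse1} we proved\dots Theorem \ref{t4} on the existence of a strong solution''), so there is no in-paper proof to compare against. That said, your proposal is the standard and, as far as one can tell from the way the hypotheses are phrased (the theorem is stated in terms of the data $z\in L^4_{\text{loc}}(\mathbb{L}^4\cap H)$ and $v_0\in H$ rather than $u_0$), almost certainly the intended route: subtract the Ornstein--Uhlenbeck process, treat the equation for $v=u-z$ pathwise by Galerkin approximation, run the $H$-level and $V$-level energy estimates, and use the factor-of-$t$ parabolic smoothing trick to pass from $v_0\in H$ to $v(\epsilon)\in V$. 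The c\`adl\`ag bookkeeping ($v$ continuous, $z$ c\`adl\`ag, hence $u\in D$) and the appeal to the uniqueness of Theorem \ref{t1u} are also handled correctly.

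The one place where your argument as written does not close is the claim that all $z$-contributions in $\langle B(v_n+z,v_n+z),Av_n\rangle$ are ``handled through $\|z\|_{\mathbb{L}^4}$''. The cross term $b(v_n,z,Av_n)$ puts a derivative on $z$, and $z\in\mathbb{L}^4$ alone does not control $\nabla z$; you need $z$ to take values in $V=D(A^{1/2})$ (then, e.g., Agmon's inequality $\|v\|_{L^\infty}\lesssim |v|^{1/2}|Av|^{1/2}$ gives $|b(v,z,Av)|\lesssim |v|^{1/2}|Av|^{3/2}\|z\|_V$, which Young absorbs). This is precisely what the summability condition $\sum_l|\sigma_l|^{\beta}\lambda_l^{\beta/2}<\infty$ in the theorem supplies --- the paper itself remarks that under this condition $z$ has a c\`adl\`ag version in $V$ --- so the gap is repairable within the stated hypotheses, but your final paragraph should invoke $\|z\|_{L^4_t V}$ rather than $\|z\|_{L^4_t\mathbb{L}^4_x}$ for the $D(A)$-level estimate. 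You do flag this estimate as the main obstacle, which is the right instinct.
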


It is well known (see for instance Chapter 9 of \cite{MR2356959}) that strong solution implies a weak solution, and the weak solution is equivalent to a mild solution. Hence the three concepts of solutions are equivalent.  With the aid to these results, our main aim in this section is to study the large time behaviour of $u$, that is, the law $\mathcal{L}(u(t,x))$ as $t\to\infty.$ In particular, we prove (\ref{asnse4}) admits at least one invariant measure
Here we consider a general cadlag Markov process,
\begin{align}
    (\Omega,\{\mathcal{F}^0_t\}_{t\ge 0},\mathcal{F},\{u^x_t\}_{t\ge 0},(\P_x)_{x\in H})
\end{align}
whose transition probability is denoted by $\{P(t,x,dy\}_{t\ge 0}$, where $\Omega :=D([0,\infty);H)$ is the space of the c\`adl\`ag function from $[0,\infty)$ to $H$ equipped with the Skorokhod topology, $\mathcal{F}^0_t=\sigma\{u_s,0\le s\le t\}$ is the natural filtration.
Now denote (resp.) by $C_b(H)$, $B_b(H)$ the space of bounded continuous functions and the space of bounded borel measurable functions on $H.$ That is,
\begin{align}
    C_b(H) &:=\{\varphi:H\to\R: \varphi\,\,\text{is continuous and bounded}\},\\
    B_b(H) &:=\{\varphi:H\to\R: \varphi\,\,\text{is bounded and borel measurable}\}.
\end{align}
For all $\varphi\in B_b(H)$, define
\begin{align*}
    P_t \varphi(x)=\int_H \varphi(y)P(t,x,dy),\quad\forall\,\,t\ge 0,x\in H.
\end{align*}
For any $t\ge 0$, $P_t$ is said to be Feller if
\begin{align}\label{feller}
    \varphi\in C_b(H)\to P_t\varphi\in C_b(H),\quad\forall\quad t\ge 0.
\end{align}
$P_t$ is said to be strong Feller if (\ref{feller}) holds for a larger class of function: $\varphi\in B_b(H).$
Moreover, $P_t$ is said to be \emph{irreducible} in $H$, if $P_t 1_{A}(x)=P_x(t,A)>0$ for any $x\in H$ and any non-empty open subset $A$ of $H$. If $P_t$ is irreducible then any invariant measure $\mu$ is full, that is, one has $\mu(B(x,r))>0$ for any ball $B(x,r)$ of center $x\in H$ and radius $r$. Indeed, it follows from the definition of invariant measure that
\begin{align*}
    \mu(B(x,r))&=\int_H P_t 1_{A}(x)\mu(dx)>0.
\end{align*}

The main theorem proved in this section is Theorem \ref{existenceim} which states below.  
\begin{thm}\label{existenceim}
Assume additionally, that there exists $m>1$ such that $\sigma_l=0$ for all $l\ge m$. Then 
    the solution $u$ to (\ref{asnse4}) admits at least one invariant measure.
\end{thm}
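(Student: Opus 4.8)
The plan is to establish existence of an invariant measure via the classical Krylov--Bogoliubov procedure: namely, to show that the family of time-averaged laws
\[
\mu_T := \frac{1}{T}\int_0^T P(t,u_0,\cdot)\,dt, \qquad T > 0,
\]
is tight on $H$, and then to invoke the fact that the Markov semigroup $P_t$ associated to (\ref{asnse4}) is Feller (which follows from the continuous dependence on initial data, Theorem \ref{t2u}). Any weak-$*$ limit point of $\{\mu_T\}$ is then invariant. The Feller property is the easy half: continuity of $x \mapsto P_t\varphi(x)$ for $\varphi \in C_b(H)$ is a direct consequence of the $H$-continuous dependence of the solution on $u_0$ asserted in Theorem \ref{t2u}, together with dominated convergence. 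So the whole weight of the argument sits on proving tightness of $\{\mu_T\}$.

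For tightness I would exploit the compact embedding $V \hookrightarrow H$ (the Stokes operator $A$ on the sphere has compact resolvent, so $D(A^{1/2}) = V$ embeds compactly in $H$). It therefore suffices to produce a uniform-in-$T$ bound of the form
\[
\frac{1}{T}\int_0^T \E\,\|u(t)\|_V^2\,dt \le K < \infty,
\]
since then $\mu_T(\{\,\|\cdot\|_V \le R\,\}) \ge 1 - K/R^2$ by Chebyshev, and the sets $\{\|\cdot\|_V \le R\}$ are relatively compact in $H$. To obtain this bound I would apply the It\^o formula to $\|u(t)\|_H^2$ (using the strong solution from Theorem \ref{t4}), use the antisymmetry of the bilinear term $\langle B(u,u),u\rangle = 0$ and of the Coriolis term $\langle \mathbf{C}u,u\rangle = 0$, and use the coercivity $\langle Au,u\rangle = \|u\|_V^2$ together with a Poincar\'e inequality $\|u\|_V^2 \ge \lambda_1 \|u\|_H^2$. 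The forcing term $f$ is controlled by Young's inequality, absorbing $\tfrac12\|u\|_V^2$ on the left. The crucial point is the noise contribution: because $\sigma_l = 0$ for $l \ge m$, the L\'evy noise $GdL(t)$ is genuinely finite-dimensional, and for a symmetric $\beta$-stable process one does not have finite second moments, so the martingale part of $\langle\!\langle$It\^o$\rangle\!\rangle$ must be handled via the compensator of the jump measure restricted to small jumps, or equivalently by working with $\|u\|_H^p$ for $p < \beta$ (or $p<1$) rather than $p=2$, where the moments do exist. This is exactly why the finite-dimensionality hypothesis is imposed: it makes the jump intensity measure finite away from the origin and keeps the stochastic integral terms tractable.

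The main obstacle, then, is the low integrability of stable noise. I would resolve it by one of two standard routes: either (i) split $L$ into a compound-Poisson part (large jumps) plus a small-jump part that is an $L^2$-martingale with explicitly bounded quadratic variation, treat the two contributions separately, and then bound fractional moments $\E\|u(t)\|_H^{\gamma}$ for some small $\gamma \in (0,1)$ uniformly in $t$ using a Gronwall-type argument on the dissipative inequality; or (ii) follow the approach of Dong--Xu--Zhang type estimates for SPDEs with cylindrical stable noise, deriving directly an a priori estimate $\sup_{t\ge 0}\E\|u(t)\|_H^{\gamma} + \sup_T \tfrac1T\int_0^T \E\|u(t)\|_V^{\gamma}\,dt < \infty$. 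Because the noise lives in a fixed finite-dimensional subspace spanned by finitely many eigenfunctions, its paths take values in a bounded-moment-generating class after the large-jump truncation, and the dissipativity of $-A$ dominates for $t$ large. Once the uniform fractional-moment bound on $\|u\|_V$ is in hand, tightness of $\{\mu_T\}$ follows, Krylov--Bogoliubov gives an invariant measure, and the proof is complete. I would also remark that irreducibility and hence fullness of the measure would require a separate controllability argument that is not pursued here.
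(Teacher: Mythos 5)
Your overall strategy coincides with the paper's: establish the Feller property from the continuous dependence of Theorem \ref{t2u}, then apply Krylov--Bogolyubov to the Ces\`aro averages $\mu_T$, reducing everything to tightness via a compact embedding (the paper uses $D(A^{\delta})\hookrightarrow H$, you use $V\hookrightarrow H$; both are fine). Where you genuinely diverge is in how the tightness estimate is obtained. You propose a direct It\^o-type argument on $\|u\|_H^{\gamma}$ with $\gamma<\beta$ (or a small-jump/large-jump splitting), exploiting $\langle B(u,u),u\rangle=0$ so that the nonlinearity drops out and the only obstacle is the low integrability of the stable noise. The paper instead follows the Crauel--Flandoli route: it subtracts a \emph{stationary} Ornstein--Uhlenbeck process $z_{\alpha}$ with large damping $\alpha$, performs a purely pathwise energy estimate on $v=u-z_{\alpha}$ (no stochastic integration against $u$ at all), and closes the Gronwall argument by applying the ergodic theorem to $t\mapsto\sum_{l=1}^m|z_l(t)|$ so that the exponent $\gamma(t)=-\tfrac{\lambda_1}{2}+4\eta\sum_l|z_l(t)|$ has negative time average for $\alpha$ large. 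Note that in the paper the finite-dimensionality hypothesis is used for a different and more essential reason than the one you give: it is not primarily about making the jump measure tractable, but about the bilinear estimate $|\langle B(u,e_l),u\rangle|\le\eta|u|^2$, which tames the cross term $b(u,z,u)$ that would otherwise produce the uncontrollable quantity $|v|^2|z|_{\mathbb{L}^4}^4$ after the change of variables. Your approach, if carried out, buys a cleaner argument that bypasses the ergodic theorem and the OU machinery, but it requires a rigorous It\^o formula for fractional powers $\|u\|^{\gamma}$ of the variational solution with jumps, which is delicate and not developed in this paper; the paper's approach buys a pathwise, deterministic-PDE-style estimate at the cost of needing stationarity and ergodicity of $z_{\alpha}$ and the choice of $\alpha$ via \eqref{z10}. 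Both are viable; yours is closer to the Dong--Xu--Zhang line of argument for stable noise, and to make it a complete proof you would need to supply the fractional-moment It\^o lemma and verify that the compensator integrals $\int(|h|^{2}\wedge|h|^{\gamma})\,\nu(dh)$ converge for $\gamma<\beta$.
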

    
We claim that the SNSE (\ref{asnse4}) has an invariant measure. The key to proving this is to use the Krylov-Bogolyubov Theorem, which guarantees the existence of invariant measures for certain well-defined maps defined on some well-defined space. More precisely, the theorem states that,
\begin{thm}[Krylov-Bogolyubov]
    Assume $(P_t,t\ge 0)$ is a \textbf{Feller} semigroup. If there exists a point $x\in H$ for which the family of probability measure $\{\mu_t(x,\cdot)\}_{t\ge 0}$ is uniformly \textbf{tight}, that is, there exists a compact set $K_{\epsilon}\subset H$ such that $\mu(K_{\epsilon})\ge 1-\epsilon$ for any $\mu\in\Lambda$ on $(H,\mathcal{B}(H))$ then there exists at least one invariant measure.
\end{thm}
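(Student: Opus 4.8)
The plan is to follow the classical time-averaging argument of Krylov and Bogolyubov: I build a candidate invariant measure as a weak subsequential limit of Cesàro averages of the transition probabilities started at the distinguished point $x$, and then invoke the Feller property to verify that this limit is invariant. To set up, I take the family in the hypothesis to be the averaged measures
\begin{align*}
    \mu_T(x,A)=\frac{1}{T}\int_0^T P(t,x,A)\,dt,\qquad A\in\mathcal{B}(H),\ T>0,
\end{align*}
which are genuine probability measures on $(H,\mathcal{B}(H))$ by Fubini's theorem together with the fact that each $P(t,x,\cdot)$ is a probability measure. (If one instead starts from tightness of the raw transition probabilities $P(t,x,\cdot)$, the averages $\mu_T(x,\cdot)$ inherit that tightness, so no generality is lost.) These are the measures whose uniform tightness is assumed.

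Since $H$ is a separable Hilbert space, hence a Polish space, uniform tightness of $\{\mu_T(x,\cdot)\}_{T>0}$ allows me to apply Prokhorov's theorem: the family is relatively compact in the topology of weak convergence of probability measures on $H$. I therefore extract a sequence $T_n\to\infty$ and a probability measure $\mu$ on $(H,\mathcal{B}(H))$ with $\mu_{T_n}(x,\cdot)\rightharpoonup\mu$, meaning $\int_H\varphi\,d\mu_{T_n}(x,\cdot)\to\int_H\varphi\,d\mu$ for every $\varphi\in C_b(H)$. This limit $\mu$ is the candidate invariant measure.

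The remaining task is to show $\int_H P_s\varphi\,d\mu=\int_H\varphi\,d\mu$ for all $s\ge 0$ and all $\varphi\in C_b(H)$, which characterizes invariance. The computation rests on the semigroup identity $\int_H P_s\varphi\,dP(t,x,\cdot)=(P_{t+s}\varphi)(x)$. Integrating in $t$, averaging, and shifting the variable of integration gives
\begin{align*}
    \int_H P_s\varphi\,d\mu_T(x,\cdot)-\int_H\varphi\,d\mu_T(x,\cdot)
    =\frac{1}{T}\left(\int_T^{T+s}(P_r\varphi)(x)\,dr-\int_0^s(P_r\varphi)(x)\,dr\right).
\end{align*}
Because $\|P_r\varphi\|_\infty\le\|\varphi\|_\infty$ for every $r\ge 0$, the right-hand side is bounded in absolute value by $2s\|\varphi\|_\infty/T$, which vanishes as $T\to\infty$. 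The Feller hypothesis enters here decisively: it guarantees $P_s\varphi\in C_b(H)$, so that both $\varphi$ and $P_s\varphi$ are admissible test functions for the weak convergence along $T_n$. Passing to the limit $n\to\infty$ in the displayed identity then yields $\int_H P_s\varphi\,d\mu=\int_H\varphi\,d\mu$, establishing invariance of $\mu$ and completing the proof.

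The step I expect to require the most care is the invariance verification, and within it the interchange of limit and integration. Weak convergence only controls integrals of bounded continuous functions, so it is essential to confirm that $P_s\varphi$ belongs to $C_b(H)$ before applying $\mu_{T_n}(x,\cdot)\rightharpoonup\mu$; this is exactly where the Feller assumption cannot be dispensed with. By contrast, tightness is used only once, to produce a convergent subsequence via Prokhorov, and the boundary-term estimate is routine. Verifying the tightness and Feller hypotheses for the concrete equation (\ref{asnse4}) is a separate matter, carried out in the proof of Theorem \ref{existenceim}.
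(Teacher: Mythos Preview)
Your argument is the standard Krylov--Bogolyubov proof and is correct: form Ces\`aro averages, extract a weak limit via Prokhorov, and use the Feller property to pass to the limit in the invariance identity. Each step is sound, including the key observation that $P_s\varphi\in C_b(H)$ is needed to test weak convergence against $P_s\varphi$.

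As for comparison with the paper: the paper does \emph{not} supply its own proof of this theorem. It is stated as a classical result (together with the subsequent Corollary~\ref{kbcor}) and then invoked as a black box; the paper's effort goes into verifying the hypotheses---Feller and Markov properties in Section~\ref{markovfellersoln}, and tightness via the a~priori estimates of Propositions~\ref{est1}, \ref{vaest}, and~\ref{est3}---for the specific stochastic Navier--Stokes system~(\ref{asnse4}). Your proposal therefore fills in a proof the paper deliberately omits, and the approach you take is exactly the one the paper implicitly has in mind when it writes ``All that required to be proved are Feller, Markov property \ldots\ and convergence of the family of probability measures''. There is nothing to correct.
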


\begin{cor}\label{kbcor}
    If for some $\nu\in\mathcal{P}$ and some sequence $T_n\uparrow +\infty$ the sequence $\{P^*_{T_n}\nu\}$ is tight, then there exists an invariant measure for $(P_t,t\ge 0).$
\end{cor}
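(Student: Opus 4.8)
The plan is to run the classical Krylov--Bogolyubov averaging argument, now with the initial law $\nu$ in place of a Dirac mass $\delta_x$ and along the prescribed sequence $T_n$ rather than the full family $\{t\ge0\}$. Consistently with the occupation measures $\mu_t(x,\cdot)$ appearing in the theorem above, I read $P^*_{T}\nu$ as the time-averaged (Ces\`aro) measure
\[
P^*_{T}\nu(\Gamma)=\frac{1}{T}\int_0^{T}(P^*_s\nu)(\Gamma)\,ds=\frac{1}{T}\int_0^{T}\!\!\int_H P(s,x,\Gamma)\,\nu(dx)\,ds,\qquad \Gamma\in\mathcal{B}(H),
\]
and the Feller hypothesis of the Krylov--Bogolyubov theorem is kept in force throughout. (The undifferenced pushforwards would not suffice: tightness of $\{P(T_n,x,\cdot)\}$ alone fails to produce invariance, as a deterministic rotation already shows, so the averaging is essential.)

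First I would extract a candidate. Since $\{P^*_{T_n}\nu\}_n$ is tight by assumption, Prokhorov's theorem yields a subsequence, not relabelled, and a probability measure $\mu$ with $P^*_{T_n}\nu\Rightarrow\mu$ weakly. The whole content of the proof is then to show that this $\mu$ is invariant, i.e. $\int_H P_t\varphi\,d\mu=\int_H\varphi\,d\mu$ for every $\varphi\in C_b(H)$ and every $t\ge0$.

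The decisive step is a telescoping identity. Fix $t\ge 0$ and $\varphi\in C_b(H)$; the Feller property gives $P_t\varphi\in C_b(H)$, so weak convergence may be applied to both $\varphi$ and $P_t\varphi$. Using duality $\int_H\psi\,d(P^*_s\nu)=\int_H P_s\psi\,d\nu$, the semigroup law $P_sP_t=P_{s+t}$ and Fubini,
\[
\int_H P_t\varphi\,d(P^*_{T_n}\nu)=\frac{1}{T_n}\int_0^{T_n}\!\!\int_H P_{s+t}\varphi\,d\nu\,ds=\frac{1}{T_n}\int_t^{T_n+t}\!\!\int_H P_r\varphi\,d\nu\,dr,
\]
so that after subtracting $\int_H\varphi\,d(P^*_{T_n}\nu)=\frac{1}{T_n}\int_0^{T_n}\int_H P_r\varphi\,d\nu\,dr$ the bulk cancels and only two boundary slabs of length $t$ survive:
\[
\left|\int_H P_t\varphi\,d(P^*_{T_n}\nu)-\int_H\varphi\,d(P^*_{T_n}\nu)\right|\le\frac{1}{T_n}\Big(\int_{T_n}^{T_n+t}+\int_0^{t}\Big)\Big|\int_H P_r\varphi\,d\nu\Big|\,dr\le\frac{2t\,\|\varphi\|_{\infty}}{T_n},
\]
where I used that $P_r$ is a Markov contraction, whence $|\int_H P_r\varphi\,d\nu|\le\|\varphi\|_\infty$. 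Letting $n\to\infty$ with $T_n\uparrow+\infty$ kills the right-hand side, while weak convergence sends the two left-hand integrals to $\int_H P_t\varphi\,d\mu$ and $\int_H\varphi\,d\mu$ respectively. Equating the limits gives $P^*_t\mu=\mu$ for all $t\ge0$, so $\mu$ is an invariant measure for $(P_t)_{t\ge0}$.

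The genuinely load-bearing point is the telescoping estimate combined with the Feller property: averaging over $[0,T_n]$ is what converts the mere asymptotics $T_n\to\infty$ into \emph{exact} invariance of the limit, and Feller continuity is precisely what licenses pushing $P_t\varphi$ through the weak limit. By contrast, Prokhorov's extraction and the contraction bound are routine. I therefore expect the telescoping identity to be the step requiring the most care, since it is where the semigroup and Markov structures must be combined correctly, whereas everything else is soft functional analysis.
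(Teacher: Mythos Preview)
The paper states this corollary without proof, treating it as a standard consequence of the Krylov--Bogolyubov theorem; your argument is correct and is exactly the classical proof one would supply. Your reading of $P^*_{T_n}\nu$ as the Ces\`aro average is consistent with the paper's own usage later on, where the authors set $\mu_T=\frac{1}{T}\int_0^T\nu_{t,u_0}\,dt$, show tightness of $\{\mu_T\}$, and then invoke precisely this corollary.
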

\vspace{1cm}
We shall remark that there are various versions of Krylov-Bogolyubov theorem which conveys the same idea. All that required to be proved are Feller, Markov property of the solution $v$ (and so $u$) and convergence of the family of probability measures $\{\mu_t, t\ge 0\}$ in $H$. This is comparable to the concept of weak convergence of distribution in finite dimension (equivalence to weak convergence of r.v.). However, in infinite dimension, the convergence of distribution is more involved. Hence extra conditions are needed besides the convergence of finite-dimensional distributions.
   
Note that,  it is known that tightness is a necessary condition to prove convergence of probability measure, especially when measure space is infinite dimensional. In this sense the two statements of the theorem is equivalent.

The following inequalities would be used quite often.
\begin{align*}
    |\hat{ A}^{\sigma}e^{-\hat{ A}t}|&\le C(\sigma)t^{-\sigma},\quad\forall\,\,\sigma>0,
\end{align*}

\begin{align}\label{bua}
    |B( u)|_V= \langle  A^{\frac{1}{2}}B( u), A^{\frac{1}{2}}B( u)\rangle =\langle  AB( u),B( u)\rangle&=| A^{\frac{1}{2}}B(u)|\\
    &\le | u|| u|^{\frac{1}{2}}_V| A u|^{\frac{1}{2}},
\end{align}

\begin{align*}
    |B( u)-B( v)|\le C(| u|^2_V+| u|^2_V| v|+| v|^2_V),
\end{align*}

\begin{align*}
    |B( u)|\le C| u|^{\frac{1}{2}}_V| u|_V|A u|^{\frac{1}{2}}=C| u|^{\frac{3}{2}}_V|A u|^{\frac{1}{2}}.
\end{align*}

\subsection{Transition Semigroup}\label{markovfellersoln}
Let us denote by $u(\cdot,x)$ the solution of (\ref{asnse4}). We set 
\begin{align*}
    P_t f(x)=\E f(u(t,x)),\quad f\in B_b(H), t\ge 0, x\in H.
\end{align*}
It follows from uniqueness and time homogeneity of $L$ that the following relation holds,
\begin{align*}
    P_t\circ P_s=P_{t+s}.
\end{align*}
Recall from our Theorem \ref{t4} that, we have proved there exists a unique strong solution to (\ref{asnse4}) with mild form in the space $D([0,T;H)\cap L^2([0,T];V)$ which belongs to $D([h,T];V)\cap L^2_{\text{loc}}([h,T];D(A))$ for all $h>0$, $T>0$ for every initial condition $ u_0\in H$, $\omega\in\Omega.$ Moreover, if $ u_0\in V$, then $ u\in D([h,T;V])\cap L^2_{\text{loc}}([h,T];D(A)) $ for all $h>0$ and $T>0.$ The solution depends continuously on initial data $ x.$ 

Let $u(t;x)$ be the solution at $t$ starting from $x$ at time 0. 
Now suppose we have two solutions, resp. $ u_n$ and $u$ of (\ref{asnse4}) started at $ \xi_n$ and $ \xi$, if the conditions in Theorem \ref{t2u} satisfied, then it follows that $ u_n(t)\to  u(t)$ a.s. for any $t.$ Therefore, $f( u_n(t))\to f( u(t))$ as $f$ is continuous. Thus, invoke Lesbesgue Dominated Convergence Theorem, one has
\begin{align}
    \E f( u_n(t))\to\E f( u(t)).
\end{align}
Whence the equation (\ref{asnse4}) defines a Feller Markov process. 
 Then we can define the operator $P_t: C_b(H)\to C_b(H)$ by 
\begin{align*}
    P_t f=\E f(u(t;x)),
\end{align*}
and $P_t$ is said to be a Feller semigroup.
\begin{lem}
    The equation (\ref{asnse4}) defines a Markov process in the sense that
    \begin{align}\label{markovian}
        \E[  f ( u^{ x}_{t+s})|\mathcal{F}_t]=P_s(  f )(u^{ x}_t),
    \end{align}
for all $t,s>0$, $  f \in C_b(H)$, where $ u^{ x}_t$ denotes\footnote{The notation $u^{ x}_t$ is used interchangeably with $u(t;x)$.} the solution to (\ref{asnse4}) over $[0,\infty]$ starting from the point $ u(0)=x$, $\mathcal{F}_t$ denotes the sigma-algebra generated by $L(\tau)$ for $\tau\le t.$
\end{lem}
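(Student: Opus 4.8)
The plan is to establish the Markov property by exploiting the uniqueness of solutions together with the independence and stationarity of the increments of the L\'evy process $L$. The structure follows the standard argument for stochastic evolution equations driven by L\'evy noise; I would organise it as a flow property plus a conditioning computation.

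First I would set up the machinery of shifted solutions. Fix $t>0$ and define the shifted driving noise $\tilde L(r) := L(t+r) - L(t)$ for $r\ge 0$. By the independent-increments property of a L\'evy process, $\tilde L$ is a L\'evy process with the same law as $L$, and crucially $\tilde L$ is independent of $\mathcal{F}_t$; moreover $\mathcal{F}_t^{\tilde L} := \sigma\{\tilde L(r): r\le s\}$ plays the role of the natural filtration for the shifted problem. Then I would observe that $v(r) := u^x_{t+r}$ solves equation (\ref{asnse4}) on $[0,\infty)$ with the driving noise $\tilde L$ and random initial condition $v(0) = u^x_t$, which is $\mathcal{F}_t$-measurable. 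This is just a change of time variable in the mild/weak formulation, using that the deterministic data $f$, $A$, $B$, $\mathbf{C}$, $G$ are time-independent. By the pathwise uniqueness from Theorem \ref{t1u} (equivalently Theorem \ref{t4}), this solution is the \emph{unique} one with that initial datum and that noise, so we may write $v(r) = u(r; u^x_t)$ where on the right-hand side the solution map is built from the noise $\tilde L$.

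Next I would carry out the conditioning. For $f\in C_b(H)$ and $s>0$,
\begin{align*}
    \E[f(u^x_{t+s})\mid\mathcal{F}_t] = \E[f(u(s; u^x_t))\mid\mathcal{F}_t],
\end{align*}
and since the solution map $\xi\mapsto u(s;\xi)$ (driven by $\tilde L$) is a measurable function of the pair $(\xi,\tilde L)$ with $\tilde L\ci\mathcal{F}_t$ and $u^x_t$ being $\mathcal{F}_t$-measurable, the standard freezing/substitution lemma for conditional expectations gives
\begin{align*}
    \E[f(u(s; u^x_t))\mid\mathcal{F}_t] = \big(\E f(u(s;\xi))\big)\big|_{\xi = u^x_t} = (P_s f)(u^x_t),
\end{align*}
where in the last equality I use that $\tilde L$ and $L$ have the same law, so the function $\xi\mapsto \E f(u(s;\xi))$ computed with $\tilde L$ agrees with the one computed with $L$, namely $P_s f$. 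This is exactly (\ref{markovian}). I would also remark that $P_s f \in C_b(H)$ by the Feller property already established above (continuous dependence on initial data via Theorem \ref{t2u} plus dominated convergence), so the right-hand side is a genuine bounded continuous function of $u^x_t$.

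The main obstacle I expect is the rigorous justification of the substitution lemma in this infinite-dimensional c\`adl\`ag setting: one must check that $(\xi,\omega)\mapsto u(s;\xi)(\omega)$ is jointly measurable from $H\times\Omega$ into $H$ (this follows from the continuous dependence on $\xi$ in Theorem \ref{t2u} together with measurability in $\omega$ for fixed $\xi$, via a Carath\'eodory-function argument), and that the factorisation of the underlying probability space into the $\mathcal{F}_t$-part and the $\sigma(\tilde L)$-part is legitimate. Once joint measurability and independence are in hand, the identity $\E[\Phi(\xi,\tilde L)\mid\mathcal{F}_t] = \E[\Phi(y,\tilde L)]\big|_{y=\xi}$ for $\mathcal{F}_t$-measurable $\xi$ independent of $\tilde L$ is a routine application of the monotone class theorem (first for $\Phi$ of product form $g(\xi)h(\tilde L)$, then extended). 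The time-homogeneity needed to identify the frozen function with $P_s$ rather than some $P_{t,t+s}$ is precisely the stationarity of the increments of $L$, which I would state explicitly as the reason the semigroup is time-homogeneous.
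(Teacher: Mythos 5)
Your proposal is correct and follows essentially the same route as the paper: uniqueness yields the flow property $u^{x}_{t+s}=u^{u^{x}_{t}}_{t,t+s}$, and the identity (\ref{markovian}) then follows from the independence of the post-$t$ noise increments from $\mathcal{F}_t$ together with a substitution argument in the conditional expectation. The only difference is presentational --- the paper carries out the freezing step by hand, first for deterministic initial data and then for simple $\mathcal{F}_t$-measurable random variables $\eta=\sum_i\eta^{(i)}1_{\Gamma^{(i)}}$, whereas you invoke the general freezing/monotone-class lemma and are somewhat more explicit about the joint measurability of $(\xi,\omega)\mapsto u(s;\xi)(\omega)$ and the passage from simple to general initial conditions, a limiting step the paper leaves implicit.
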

 
By uniqueness,
\begin{align*}
     u^{ x}_{t+s}= u^{ u^{ x}_t}_{t,t+s},\quad (\P-a.s.),
\end{align*}
where $( u^{\eta}_{t_0,t})_{t\ge t_0}$ denotes the \textbf{unique} solution on the time interval $[t_0,\infty)$, with the $\mathcal{F}_{t_0}$-measurable intial condition $ u^{\eta}_{t_0,t_0}=\eta$. To prove (\ref{markovian}), it suffices to prove
\begin{align*}
    \E[  f ( u^{\eta}_{t,t+s})|\mathcal{F}_t]=P_s(  f )(\eta),
\end{align*}
for every $H$-valued $\mathcal{F}_t$-measurable r.v. $\eta.$ Note that (\ref{markovian}) holds for all $  f \in C_b(H)$, holds for $  f =1_{\Gamma}$ where $\Gamma$ is an arbitrary Borel set of $H$ and consequently for all $f\in B_b(H).$  Without loss of generality, Let us assume $f\in C_b(H)$. We know that, if $\eta=\eta_i$ $\P$ a.s., then the r.v. $u(t+s,t,\eta_i)$ is independent to $\mathcal{F}_t$ and therefore
\begin{align*}
    \E(f( u(t+s,t,\eta_i))|\mathcal{F}_t)=\E f( u(t+s,t,\eta_i))=P_{t,t+s} f(\eta_i)=P_s f(\eta_i),\quad\P\,\,\text{a.s.}\quad.
\end{align*}
It suffices to prove (\ref{markovian}) holds for every r.v. $\eta$ of the form
\begin{align*} \eta=\sum^N_{i=1}\eta^{(i)}1_{\Gamma^{(i)}},
\end{align*} where $\eta^{(i)}\in H$ and $\Gamma^{(i)}\subset\mathcal{F}_t$ is a partition of $\Omega$, $\eta_i$ are elements of $H.$ Then
\begin{align*}
     u(t+s,t,\eta_i)=\sum^N_{i=1} u(t+s,t,\eta_i)1_{\Gamma_i},\quad\P\,\,\text{a.s.}\quad.
\end{align*}
Hence,
\begin{align*}
    \E(f( u(t+s,t,\eta))|\mathcal{F}_{t})=\sum^N_{i=1}\E(f( u(t+s,t,\eta_i))1_{\Gamma_i}|\mathcal{F}_t)\quad\P-\text{a.s.}\quad.
\end{align*}
Take into account the r.v. $ u(t+s,t,\eta_i)$ independent to $\mathcal{F}_t$ and $1_{\Gamma_i}$ are $\mathcal{F}_t$ measurable, $i=1,\cdots, k$, one deduces that
\begin{align*}
    \E[f( u(t+s,t,\eta))|\mathcal{F}_t]&=\sum^N_{i=1} P_s f(\eta_i)1_{\Gamma_i}=P_s f(\eta),\quad\P-\text{a.s.}\quad,
\end{align*}
and so (\ref{asnse4}) defines a Markov process in the above sense for all $f\in C_b(H)$. Now, let $u(t;\eta)$ be the solution of the SNSE (\ref{asnse4}) with initial condiction $\eta\in H$. 

Let $(P_t, t\ge 0)$ be the Markov Feller semigroup on $C_b(H)$ associated to the SNSE (\ref{asnse4}) defined as
\begin{align}\label{markovsemi}
    P_t f(\eta)=\E[f(u(t;\eta))]=\int_H f(y)P(t,y)dy=\int_H f(y)\mu_{t,s}(dy),\quad f\in C_b(H),
\end{align}
where $P(t,x,dy)$ is the transition probability of $u(t;\eta)$ and $\mu_{t,x}(dy)$ is the law of $u(t;\eta).$ From (\ref{markovsemi}), we have
\begin{align*}
    P_t f(x)=(f,\mu_{t,x})=(P_t f,\mu),
\end{align*}
where $\mu$ is the law of the initial data $\eta\in H$. Thus it follows from above that $\mu_{t,\eta}=P^*_t\mu$. If
\begin{align*}
    P^*_t\mu=\mu\quad\forall\quad t\ge 0,
\end{align*}
then a probability measure $\mu$ on $H$ is said to be an invariant measure.
\subsection{The proof of tightness }

We proceed the claim of tightness by first proving the following a priori estimate. The main difficulty is overcome by introducing a simplified auxiliary Ornstein Uhblenbeck process, which enables us to use the classical arguments in the spirit of p.51-150 \cite{SFS08}. To prove existence of invariant measures for (\ref{asnse4}), we write the problem in a slightly different form.

Let $H$, $A: D(A)\subset H\to H$, $V=D(A^{1/2})=D(\widehat{A}^{1/2})$ and $B:V\times V\to V', \,\,C$ be spaces and operators introduced in the previous section. Suppose that there exists a constant $c_B>0$ such that
\begin{align}\label{buvw0}
    \langle B(u,v),w\rangle=|b(u,v,w)|&\le c_B|u|^{1/2}|u|^{1/2}_V|v|^{1/2}|v|^{1/2}_V|w|_V,\quad\forall\quad u,v,z\in V,
\end{align}

\begin{align*}
    \langle B(u,v),v\rangle\le c_B|u|^{1/2}|Au|^{1/2}|v|_V|z|,
\end{align*}
for all $u\in D(A)$, $v\in V$ and $z\in H.$

In order to prove there exists at least one invariant measure, we use standard method in the spirit of Chapter 15 in \cite{MR1417491}.  However, the analysis of Navier-Stokes equations with additive noise in our case requires some non-trivial consideration, as pointed out in \cite{MR1305587},. In particular, a critical question arises when analyzing the estimate $\frac{d^+}{dt}|v(t)|^2$,  the usual estimates for the nonlinear term $b(v(t),z(t),v(t))$ yields a term $|v(t)|^2 |z(t)|^4_4,$ so we were not able to deduce any bound in $H$ for $|v(t)|^2$ under classical lines. Nevertheless, in light of the method developed in Crauel and Flandoli \cite{MR1305587}, via the usual change of variable and by writing the noise and the Ornstein-Uhlenbeck process as finite sequence of 1D processes, we are able to prove there exists at least one invariant measure to (\ref{asnse4im}).

We remark that this fundamental ODE is different from the one used in the proof of existence and uniqueness. 
Let $f\in H$ and $m>1$ be given. 
 Consider
\begin{align}\label{asnse4im} du(t)=[-Au(t)-B(u(t),u(t))+\mathbf{C}u(t)+f]dt+\sum^m_{l=1}\sigma_le_ldL_l(t),
\end{align}
where operators $A$, $B$, $C$ are are as defined, $f\in H$, $L_1,\,L_2\,\cdots L_l$ are i.i.d. $\R$-valued symmetric $\beta$-stable process on a common probability space $(\Omega,\mathcal{F},\P)$, $\sigma$ is a bounded sequence of real numbers and $e_l$ is the complete orthonormal system of eigenfunctions on $H$.
\subsubsection{Auxiliary Ornstein-Uhlenback Process}
Let $(\tilde{L}(t),t\ge 0)$ be a L\'evy process that is an independent copy of $L$. Denote by $\bar{L}$ a L\'evy process on the whole real line by
\begin{align}
\bar{L}(t)
    \begin{cases}
        L(t),\quad t\ge 0\\
        \tilde{L}(-t),\quad t<0,
    \end{cases}
\end{align}
and by $\bar{\mathcal{F}}_t$ the filtration
\begin{align*} \bar{\mathcal{F}}_t=\sigma(\bar{L}(s),s<t),\quad t\in\R.
\end{align*}
Let $\alpha>0$ be given;  For each $l=1,\cdots,m$, let $z^0_l$ be the stationary (ergodic) solution of the one dimensional equation
\begin{align*}
    dz^0_l+(\lambda_l+\alpha) z^0_ldt=\sigma_ldL_l(t)
\end{align*}
so that 
\begin{align*}  z^0_l(t)=\int^t_{-\infty}e^{-(\lambda_l+\alpha)(t-s)}\sigma_ldL_l(s)
\end{align*}
Note that the integral above is well defined, since for any $p\in(1,\beta)$ with $\beta>1$ we have 
\begin{equation}\label{eq_fin}
\begin{aligned}
\mathbb E\left|z^0_l(t)\right|^p&=C_{p,\beta}\int_0^\infty e^{-p(\lambda_l+\alpha)(t-s)}\sigma_l^p\,ds\\
&=\dfrac{C_{p,\beta}\sigma_l^p}{p\left(\alpha+\lambda_l\right)}.
\end{aligned}
\end{equation}
More precisely, let 
\[z^0_l(t,s)=\int^t_{s}e^{-(\lambda_l+\alpha)(t-r)}\sigma_ldL_l(r)\,.\]
Then one can show directly evaluating integrals in the same way that 
\[\lim_{s\to-\infty}z_l(t,s)=z_l(t)\]
exists. 
Putting $z^0(t)=\sum^m_{l=1}z^0_l(t)e_l$ one has
\begin{align}\label{auxou}
    dz^0+(A+\alpha I) z^0\,dt=GdL(t)\,,
\end{align}
where $Ge_l=\sigma_le_l$, or
\[z^0(t)=\int_{-\infty}^t e^{-(t-s)(A+\alpha I)}GdL(s)\,.\]
 We have for any $s,t$ such that $-\infty<s<t<\infty$ 
\[z(t)=\int_{-\infty}^te^{-(t-s)\widehat A}GdL(s)=e^{-(t-s)\widehat A}z(s)+\int_s^te^{-(t-r)\widehat A}GdL(r)\,.\]
We need another lemma. 
\begin{lem}\label{dom_A}
We have 
\[\sup_{-1\le t\le 0}|Az(t)|^2<\infty\,.\]
\end{lem}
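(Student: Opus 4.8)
The plan is to establish the bound by treating $z(t)$ as the stationary Ornstein--Uhlenbeck process $z^0(t)$ built from the finitely many one-dimensional components $z^0_l(t)$, $l=1,\dots,m$, and controlling $\widehat A z^0(t)$ coordinate-wise. Since $\sigma_l=0$ for $l\ge m$, the sum defining $z^0$ is finite, and $\widehat A z^0(t)=\sum_{l=1}^m \mu_l z^0_l(t)e_l$ where $\mu_l$ denotes the (possibly complex, but with positive real part $\lambda_l$) eigenvalue of $\widehat A=A+\mathbf C$ restricted to the relevant finite-dimensional invariant subspace; in any case $|\widehat A z^0(t)|^2 \le C_m \sum_{l=1}^m |z^0_l(t)|^2$ for a constant $C_m$ depending only on $m$ and the operator norms. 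So it suffices to bound $\sup_{-1\le t\le 0}|z^0_l(t)|^2$ for each fixed $l$.

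First I would fix $p\in(1,\beta)$ and recall from \eqref{eq_fin} that $\mathbb E|z^0_l(t)|^p = C_{p,\beta}\sigma_l^p / (p(\alpha+\lambda_l))$ is finite and, crucially, \emph{independent of $t$} by stationarity. This gives an $L^p$ bound but not yet a pathwise supremum bound over $[-1,0]$. To upgrade to the supremum, I would use the decomposition already recorded in the excerpt,
\[
z^0_l(t) = e^{-(\lambda_l+\alpha)(t-s)} z^0_l(s) + \int_s^t e^{-(\lambda_l+\alpha)(t-r)}\sigma_l\, dL_l(r),
\]
with $s=-1$. The first term is bounded by $|z^0_l(-1)|$, which is $\P$-a.s. finite by \eqref{eq_fin}. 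For the second term, the stochastic convolution $Y_l(t):=\int_{-1}^t e^{-(\lambda_l+\alpha)(t-r)}\sigma_l\,dL_l(r)$ over the compact interval $[-1,0]$ is a one-dimensional O--U type integral driven by a symmetric $\beta$-stable process; its sample paths are c\`adl\`ag and one obtains $\mathbb E\sup_{-1\le t\le 0}|Y_l(t)|^p<\infty$ either by a Kolmogorov/Doob-type maximal estimate for stochastic convolutions with respect to stable noise, or more elementarily by writing $Y_l(t)=\sigma_l L_l(t)-\sigma_l L_l(-1)e^{-(\lambda_l+\alpha)(t+1)} -(\lambda_l+\alpha)\int_{-1}^t e^{-(\lambda_l+\alpha)(t-r)}\sigma_l L_l(r)\,dr$ (integration by parts), after which $\sup_{-1\le t\le 0}|Y_l(t)|$ is dominated by $C(1+\sup_{-1\le t\le 0}|L_l(t)|)$, and the supremum of a c\`adl\`ag stable path over a compact interval is a.s. finite (indeed in $L^p$ for $p<\beta$, by the L\'evy--It\^o representation or a standard maximal inequality).

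Combining, $\sup_{-1\le t\le 0}|z^0_l(t)|^2 \le 2|z^0_l(-1)|^2 + 2\sup_{-1\le t\le 0}|Y_l(t)|^2 < \infty$ $\P$-a.s., and summing over the finitely many $l=1,\dots,m$ yields $\sup_{-1\le t\le 0}|\widehat A z(t)|^2 \le C_m \sum_{l=1}^m \sup_{-1\le t\le 0}|z^0_l(t)|^2 < \infty$ $\P$-a.s., which is the claim. The main obstacle is the passage from the stationary $L^p$ bound to the pathwise supremum over $[-1,0]$: unlike the Gaussian case one cannot invoke Burkholder--Davis--Gundy, so one must either cite an appropriate maximal inequality for stochastic convolutions driven by $\beta$-stable noise or argue via the integration-by-parts reduction to $\sup|L_l|$ together with the finiteness of stable running maxima on compact intervals; everything else (finiteness of the sum, the coordinate-wise bound on $\widehat A$, and the a.s.\ finiteness of $|z^0_l(-1)|$) is routine given the hypothesis $\sigma_l=0$ for $l\ge m$ and the computation \eqref{eq_fin}.
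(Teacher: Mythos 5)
Your argument is correct, but it takes a more self-contained route than the paper. The paper's own proof is a two-line reduction: it observes that $Z^0=Az^0$ is itself an Ornstein--Uhlenbeck-type convolution driven by the L\'evy process $AL$ (which is again an admissible finite-dimensional $\beta$-stable noise since $\sigma_l=0$ for $l\ge m$), invokes Lemma 3.1 of \cite{snse1} for that process to get $\sup_{-1\le t\le 0}|Z^0(t)|^2<\infty$, and then uses $D(A)=D(\widehat A)$ to transfer the bound to $Az$. You instead exploit the finite-dimensionality directly: you decompose $Az^0(t)$ coordinate-wise into the $m$ scalar processes $z^0_l$, and for each one you upgrade the stationary $L^p$ bound \eqref{eq_fin} to a pathwise supremum over $[-1,0]$ via the semigroup decomposition at $s=-1$ plus an integration by parts that dominates the stochastic convolution by $C\bigl(1+\sup_{-1\le t\le 0}|L_l(t)|\bigr)$, which is a.s.\ finite because a c\`adl\`ag path is bounded on compacts. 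This correctly identifies the one genuinely nontrivial step (you cannot use Burkholder--Davis--Gundy for stable noise, so the passage from a fixed-time moment bound to a running supremum needs either a maximal inequality or the pathwise integration-by-parts trick), and your version has the advantage of not outsourcing that step to the companion paper. The only cosmetic caveat is the interchange between $A$ and $\widehat A=A+\mathbf{C}$: the paper's $z$ is defined through the semigroup of $\widehat A$, so the coordinate decomposition should be justified on the finite-dimensional $\widehat A$-invariant subspaces (or one notes, as the paper does, that $D(A)=D(\widehat A)$ with equivalent graph norms); you flag this, so no gap results.
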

\begin{proof}    
Note first that the process $Z^0=Az^0$ is well defined and satisfies all the assumptions of Lemma 3.1 in \cite{snse1} with the process $L$ replaced by another L\'evy process $AL$. Therefore, we have 
\[\sup_{-1\le t\le 0}\left|Z^0(t)\right|^2<\infty\,.\]
Since $D(A)=D\left(\widehat A\right)$, one can repeat all arguments from the proof of Lemma 3.1. This yields
\[\sup_{-1\le t\le 0}\left|Az(t)\right|^2<\infty\,.\]
\end{proof}
Now, using the lemma above and Lemma 3.1 applied with $\delta=\frac12$ we find that the process $z$ is c\`ad\`ag in $V$ and   
\begin{align}\label{zzvaz}
    \sup_{-1\le t\le 0}\left(|z(t)|^2+|z(t)|^2_V+|Az(t)|^2\right)<\infty\quad\P\,\,\text{a.s.}\quad .
\end{align}
Using  equation (4.12) in \cite{MR2773026}, one can now choose $\alpha>0$ such that    
\begin{align}\label{z10}
    4\eta m\E|z_1(0)|\le \frac{\lambda_1}{4},
\end{align}
where $\lambda_1$ is the first eigenvalue of $A$, since $\E|z_1(0)|^p\to 0$ as $\alpha\to\infty.$

 From (\ref{z10}) and the Ergodic Theorem we obtain
\begin{align*} \lim_{t_0\to-\infty}\frac{1}{-1-t_0}\int^{-1}_{t_0}4\eta\sum^m_{l=1}|z_l(s)|ds=4\eta m\E|z_1(0)|<\frac{\lambda_1}{4}.
\end{align*}
Put $\gamma(t)=-\frac{\lambda_1}{2}+4\eta\sum^m_{l=1}|z_l(t)|$, we get
\begin{align}\label{gp1}
    \lim_{t_0\to-\infty}\frac{1}{-1-t_0}\int^{-1}_{t_0}\gamma(s)ds<-\frac{\lambda_1}{4}.
\end{align}
From this fact and by stationarity of $z_l$ we finally obtain
\begin{align}\label{gp2}   \lim_{t_0\to-\infty}e^{\int^{-1}_{t_0}\gamma(s)ds}=0\quad\P-\text{a.s.}\quad,
\end{align}

\begin{align}\label{gp3}   \sup_{t_0<-1}e^{\int^{-1}_{t_0}\gamma(s)ds}|z(t_0)|^2<\infty,\quad\P-\text{a.s.}\quad .
\end{align}

\begin{align}\label{gp4}
    \int^{-1}_{-\infty}e^{\int^{-1}_{\tau}\gamma(s)ds}(1+|z_l(\tau)|^2+|z_l(\tau)|^2_V+|z_l(\tau)|^2|z_l(\tau)|)d\tau<\infty,\quad\P-\text{a.s.}\quad. 
\end{align}
for all $1\le j$, $l\le m.$ Indeed, note for instance that for $t<0$,
\begin{align*} \frac{z_l(t)}{t}=\frac{z_l(0)}{t}-\frac{1}{t}(\alpha+A_l)\int^0_t z_l(s)ds+\frac{L_l(t)}{t},
\end{align*}
whence $\lim_{t\to-\infty}\frac{z_l(t)}{t}=0$ $\P$-a.s., which implies (\ref{gp2}) and (\ref{gp3}).
Consider the abstract SNSE
\begin{align*}   du+[Au+B(u)+\mathbf{C}u]dt=fdt+GdL(t)
\end{align*}
and the Ornstein-Uhlenback equation
\begin{align*}
    dz+(\hat{A}+\alpha I) z dt=GdL(t),
\end{align*}
where $L(t)=\sum^{m}_{l=1}e_l L_l(t)$.
We now use the change of variable $v(t)=u(t)-z(t).$ Then, by subtracting the Ornstein-Uhlenback equation from the abstract SNSE, we find that $v$ satisfies the equation
\begin{align}\label{odeim}
    \frac{d^+v}{dt}=-\nu Av(t)-\mathbf{C}v(t)-B(u,u)+f+\alpha z.
\end{align}

Recall the Poincare inequalities
\begin{align}\label{poincare}
    |u|^2_V&\ge\lambda_1|u|^2,\quad\forall\quad u\in V,\\
    |Au|^2&\ge\lambda_1|u|^2,\quad\forall\quad u\in D(A).
\end{align}
Let us note that  there exists $\eta>0$ such that
\begin{align}\label{bsum}
    |\langle B(u,e_l),u\rangle|\le\eta|u|^2,\quad u\in V, l=1,\cdots,m.
\end{align}
Then the following holds. 
\begin{prop}\label{est1}
    Let $\alpha>0$, $ v$ is a mild solution of (\ref{odeim}), there exist cosntants $c, c'>0$ depending only on $\lambda_1$ such that
\begin{align}\label{ineq1}
    \frac{1}{2}\frac{d^+}{dt}|v|^2+\frac{1}{2}|v|^2_V\le \left(-\frac{\lambda_1}{4}+2\eta\sum^m_{l=1}|z_l|\right)|v|^2+c|f|^2+c\alpha|z|^2+2\eta|z|^2\sum^m_{l=1}|z_l|.
\end{align}
\end{prop}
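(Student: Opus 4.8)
The plan is to derive the differential inequality (\ref{ineq1}) by testing equation (\ref{odeim}) against $v$ in $H$, that is, by formally computing $\langle \frac{d^+v}{dt}, v\rangle$ and estimating each term on the right-hand side. Writing $B(u,u) = B(v+z, v+z)$ and using bilinearity, we expand into $B(v,v) + B(v,z) + B(z,v) + B(z,z)$. Since $\langle B(a,b),b\rangle = 0$ for $b\in V$ (the standard antisymmetry of the trilinear form), the term $\langle B(v,v),v\rangle$ vanishes, and similarly $\langle B(z,v),v\rangle = 0$. This leaves $-\langle B(v,z),v\rangle - \langle B(z,z),v\rangle$ to control, plus the linear terms.

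First I would handle the principal terms: $\langle Av, v\rangle = |v|^2_V$ gives the dissipation, and $\langle \mathbf{C}v, v\rangle = 0$ since the Coriolis operator is skew-symmetric in $H$ (this is recorded in \cite{snse1}). For the forcing and Ornstein--Uhlenbeck correction, I would use Cauchy--Schwarz and Young's inequality: $\langle f, v\rangle \le c|f|^2 + \varepsilon |v|^2_V$ via Poincar\'e (\ref{poincare}), and likewise $\alpha\langle z, v\rangle \le c\alpha|z|^2 + \varepsilon|v|^2_V$; the absorbing terms $\varepsilon|v|^2_V$ are kept small enough to be soaked up by the $|v|^2_V$ coming from $\langle Av,v\rangle$, leaving $\tfrac12|v|^2_V$ on the left. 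The delicate term is $\langle B(v,z),v\rangle$: expanding $z = \sum_{l=1}^m z_l(t)e_l$ we get $\langle B(v,z),v\rangle = \sum_{l=1}^m z_l(t)\langle B(v,e_l),v\rangle$, and now the crucial structural estimate (\ref{bsum}) — valid precisely because the noise is finite-dimensional and the $e_l$ are smooth eigenfunctions — yields $|\langle B(v,z),v\rangle| \le \eta\sum_{l=1}^m |z_l(t)|\,|v|^2$. This is exactly where the finite-dimensional noise assumption of Theorem \ref{existenceim} is used; the term $2\eta\sum_l |z_l|\,|v|^2$ on the right of (\ref{ineq1}) arises from this bound together with the analogous splitting of the remaining piece.

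It remains to treat $\langle B(z,z),v\rangle$. Writing $z = \sum_l z_l e_l$ again, $\langle B(z,z),v\rangle = \sum_{l} z_l \langle B(z,e_l),v\rangle$; here I would not use (\ref{bsum}) directly (since the last slot is $v$, not $z$) but rather dualize — $|\langle B(z,e_l),v\rangle| \le |B(z,e_l)|_{V'}|v|_V$ — and then estimate $|B(z,e_l)|_{V'}$ by the smoothness of $e_l$ and the norms of $z$, producing (after Young) a contribution bounded by $2\eta|z|^2\sum_{l=1}^m|z_l| + \varepsilon|v|^2_V$, with the $\varepsilon|v|^2_V$ again absorbed. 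Collecting all the absorbed $|v|^2_V$ contributions so that $\tfrac12|v|^2_V$ survives on the left, and combining the $|v|^2$ coefficients into $-\tfrac{\lambda_1}{4} + 2\eta\sum_l|z_l|$ (the $-\lambda_1/4$ being what is left of a $-\tfrac12\lambda_1|v|^2$ from Poincar\'e after a quarter is spent absorbing), we arrive at (\ref{ineq1}). The main obstacle is the careful bookkeeping of the several $\varepsilon|v|^2_V$ terms — one must verify that the coefficients genuinely fit under the available $|v|^2_V$ and that the constants $c, c'$ depend only on $\lambda_1$ — together with justifying the estimate for $\langle B(z,z),v\rangle$ in the correct negative-order norm; the inequality $|B(u)|_{V'} \le c|u||u|_V$-type bounds listed after Corollary \ref{kbcor}, specialized to the smooth functions $z_l e_l$, are what make this work.
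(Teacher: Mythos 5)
Your overall strategy (energy identity in $H$, Poincar\'e, Young's inequality, absorption of small multiples of $|v|^2_V$ into the dissipation) matches the paper's, but your decomposition of the nonlinearity diverges from it, and the divergence creates a genuine gap at the term $\langle B(z,z),v\rangle$. You split $\langle B(u,u),v\rangle$ into four pieces; the pieces $\langle B(v,v),v\rangle$ and $\langle B(z,v),v\rangle$ vanish by antisymmetry, and $\langle B(v,z),v\rangle$ is controlled by (\ref{bsum}) exactly as you say. But for $\langle B(z,z),v\rangle$ you propose the duality bound $|\langle B(z,z),v\rangle|\le |B(z,z)|_{V'}|v|_V$ followed by Young; this produces $\varepsilon|v|^2_V + C|B(z,z)|^2_{V'}$, and by (\ref{buvw0}) one has $|B(z,z)|^2_{V'}\le c|z|^2|z|^2_V$, which for $z=\sum_{l\le m} z_le_l$ is of order $|z|^2\bigl(\sum_{l=1}^m|z_l|\bigr)^2$ --- quadratic in $\sum_l|z_l|$, not linear as in the claimed term $2\eta|z|^2\sum_l|z_l|$ of (\ref{ineq1}), and with a constant unrelated to the $\eta$ of (\ref{bsum}). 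As written, your route therefore does not deliver the stated right-hand side; it would deliver a usable but different inequality, and the later integrability argument for $p(t)$ would then have to be re-examined for the quadratic term.

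The paper avoids the four-way split entirely: since $v=u-z$ and $\langle B(u,u),u\rangle=0$, one has $\langle B(u,u),v\rangle=-\langle B(u,u),z\rangle=\langle B(u,z),u\rangle$, which has the \emph{same} function $u$ in both outer slots, so (\ref{bsum}) applies in one stroke,
\begin{align*}
\langle B(u,z),u\rangle=\sum^m_{l=1} z_l\langle B(u,e_l),u\rangle\le\eta|u|^2\sum^m_{l=1}|z_l|,
\end{align*}
and then $|u|^2\le 2|v|^2+2|z|^2$ produces exactly the two terms $2\eta|v|^2\sum_l|z_l|$ and $2\eta|z|^2\sum_l|z_l|$ appearing in (\ref{ineq1}). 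If you want to keep your decomposition, you would need either a mixed-argument version of (\ref{bsum}) for $\langle B(z,e_l),v\rangle$ --- which does not follow from (\ref{bsum}) by polarization, since only the symmetrized part $\langle B(a,e_l),b\rangle+\langle B(b,e_l),a\rangle$ is controlled that way --- or you must accept a modified right-hand side. The cleaner fix is to adopt the antisymmetry step above; the rest of your argument (treatment of $\langle Av,v\rangle$, $(\mathbf{C}v,v)=0$, the forcing and $\alpha z$ terms, and the bookkeeping of absorbed $|v|^2_V$ contributions) is in line with the paper.
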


\begin{proof}
Let $\alpha> 0$ be given. Denote for simplicity by $z(t)$ the stationary Orstein Uhbleck process, corresponding to $\alpha$, introduced in earlier. Using the classical change of variable $v(t)=u(t)-z(t)$
 , the well known identity $\frac{1}{2}\partial_t|v(t)|^2=(v(t),v(t))$, and the antisymmetric term $(Cv,v) =0$ we have
\begin{align}\label{deq}    \frac{1}{2}\frac{d^+}{dt}|v|^2&=-\nu(Av,v)-\langle B(u,z),u\rangle+(\alpha z,v)+\langle f,v\rangle\\
    &\le -\nu |v|^2_V-\langle B(u,z),u\rangle+\alpha |z||v|+|f||v|.
\end{align}
By the definition of $z$ and assumptions (\ref{bsum}),
\begin{align*}
    \langle B(u,z),u\rangle&=\sum^m_{l=1}\langle B(u,e_l),u\rangle z_l\le\eta|u|^2\sum^m_{l=1}|z_l|\\
    &\le 2\eta|v|^2\sum^m_{l=1}|z_l|+2\eta|z|^2\sum^m_{l=1}|z_l|,
\end{align*}
and the inequalities
\begin{align*}
    \langle \alpha z, v\rangle = c\alpha |z|^2+c'|v|^2,
\end{align*}

\begin{align*}
    \langle f, v\rangle \le c |f|^2+c'|v|^2.
\end{align*}

For simplicity we take $\nu=1$.
Then via Young inequality, one can show that there exists $c, c'>0$ depending only on $\lambda_1$ such that
\begin{align*}
    \frac{1}{2}\frac{d^+}{dt}|v|^2+\frac{1}{2}|v|^2_V\le -\frac{1}{2}|v|^2+2\eta|v|^2\sum^m_{l=1}|z_l|+2\eta |z|^2\sum^m_{l=1}|z_l|+c|f|^2+2c'|v|^2+c\alpha |z|^2+2c|z|^2_V+c'|v|^2_V.
\end{align*}
So
\begin{align*}
    \frac{1}{2}\frac{d^+}{dt}|v|^2+\frac{1}{2}|v|^2_V\le (-\frac{\lambda_1}{4}+2\eta\sum^m_{l=1}|z_l|+2c')|v|^2+c|f|^2+c\alpha|z|^2+2\eta|z|^2\sum^m_{l=1}|z_l|.
\end{align*}
Hence one can find a constant $c, c'>0$ depending only on $\lambda_1$ for which the claim follows.
Moreover, Let $\gamma(t)$, and $p(t)$ are defined as :
\begin{align*}    p(t)=c|f|^2+c\alpha|z|^2+\eta|z|^2\sum^m_{l=1}|z_l(t)|,
\end{align*}
\begin{align*}   \gamma(t)=-\frac{\lambda_1}{2}+4\eta\sum^m_{l=1}|z_l(s)|,
\end{align*}
 we have
 \begin{align}\label{dineq}     \frac{1}{2}\frac{d^+}{dt}|v|^2+\frac{1}{2}|v|^2_V\le \frac{1}{2}\gamma(t)|v|^2+p(t).
 \end{align}
 \end{proof}
Temporarily disregard the $|v(t)|_V$ term, we have
\begin{align*}    \frac{d^+}{dt}|v(t)|^2\le\gamma(t)|v(t)|^2+2p(t)
\end{align*}
which implies
\begin{align}\label{ineq}
    |v(t)|^2\le |v(\tau)|^2 e^{\int^t_{\tau}\gamma(s)ds}+\int^t_{t_0}e^{\int^t_s\gamma(\xi)d\xi}2p(s)ds.
\end{align}
Now drop out the first term in (\ref{dineq}), integrate over $[\tau,t]$ we have 
\begin{align}\label{intineq}
    \int^t_{\tau}|v(s)|^2_Vds\le (\sup_{\tau\le s\le t}|v(s)|^2)\int^t_{\tau}\gamma(\xi)d\xi +\int^t_{\tau} 2p(s)ds.
\end{align}
Let us recall, that we proved the existence an uniqueness of solutions to the stochastic Naver-Stokes equation under the assumption that 
\begin{equation}\label{eq_v1}
\sum_{l=1}^\infty|\sigma_l|^{\beta}\lambda^{\beta/2}_l<\infty\,,
\end{equation}
and then the process $z(\cdot)$ has a c\`adl\`ag  version in $V=D(A^{1/2})$.

We will show that, under the above assumption, there exist at least one invariant measure for SNSE.
Let  $f\in H$, 
be given.
For an arbitrary real number $s$, $u(t,s)$, $t\ge s$, is the unique solution to the SNSE
\begin{align*}
    \begin{cases}
        d u(t)+A u(t)dt+B( u(t), u(t))dt+\mathbf{C} u(t)dt=fdt+dL(t),\\
         u(s)=0
    \end{cases}
\end{align*}
\begin{rmk}
    The space $D( A^{\delta})$ is compactly embedded into the space $H.$
\end{rmk}
\vspace{1cm}
 Consequently, if one prove that the process $ u(t,0), t\ge 1$ is bounded in probability as a process with values on $D( A^{\delta})$, one gets immediately the law $\mathcal{L}( u(t,0))$, $t\ge 1$ are tight on $H.$ This suffices the claim of existence of an invariant measure. More precisely, one proves in two steps.
 
\textbf{Step 1} Assuming that \eqref{eq_v1} holds we will prove an a priori bound in $H$. For any $\alpha\ge 0$, denote by $ z$ the stationary solution of
\begin{align*}
    d z_{\alpha}+(\hat{A}+\alpha I) z_{\alpha} dt=d\bar L(t),
\end{align*}
where
\begin{align}
     z_{\alpha}(t)= z(t)+e^{-(\hat{A}+\alpha)(t-s)}(z_{\alpha}-z(s))-\alpha\int^t_s e^{-(\hat{A}+\alpha)(t-s)}z(\sigma)d\sigma.
\end{align}
Let
\begin{align*}
     v_{\alpha}(t,s)= u(t,s)- z_{\alpha}(t),\quad t\ge s.
\end{align*}
Then $ v_{\alpha}(t)= v_{\alpha}(t,s)$, $t\ge s$ is the mild solution to
\begin{align*}
    \partial_t  v_{\alpha}(t)+\nu A v(t)dt+\mathbf{C}(v_{\alpha}(t)+z_{\alpha}(t))&=-B( v_{\alpha}(t)+ z_{\alpha}(t))+f+\alpha  z_{\alpha}(t),\quad t\ge s,\\
     v(s)=- z_{\alpha}(s)
\end{align*}
Following step 1 one has the following proposition
\begin{prop}\label{vaest}
    There exists $\alpha>0$ and a random variable $ \xi$ such that $\P$-a.s.
\begin{align}
    | v_{\alpha}(t,s)|&\le \xi\quad\forall\,t\in [-1,0]\quad\text{and all}\quad s\le -1,\label{vats}\\
    \int^0_{-1}| v_{\alpha}(t,s)|^2_{V}ds&< \xi\quad\forall\,\,t\in [-1,0]\,\,\text{and all}\,\,s\le -1.\label{vatsi}
\end{align}
\end{prop}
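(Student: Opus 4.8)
The plan is to carry out the energy estimate from Proposition~\ref{est1} along the whole half-line $(-\infty,-1]$ and exploit the exponential decay furnished by \eqref{gp2}--\eqref{gp4}. First I would observe that the differential inequality \eqref{dineq}, rewritten without the $|v|_V^2$ term, gives the Gr\"onwall-type bound \eqref{ineq} for $v_\alpha$, namely $|v_\alpha(t,s)|^2\le |v_\alpha(s,s)|^2 e^{\int_s^t\gamma(r)\,dr}+\int_s^t e^{\int_\sigma^t\gamma(r)\,dr}\,2p(\sigma)\,d\sigma$, where the initial value is $|v_\alpha(s,s)|^2=|z_\alpha(s)|^2$. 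The point of fixing $\alpha>0$ large (via \eqref{z10}, which forces $4\eta m\,\E|z_1(0)|\le \lambda_1/4$) is precisely that the ergodic theorem then yields \eqref{gp1}, i.e. the time-averaged exponent $\int_{t_0}^{-1}\gamma(s)\,ds$ tends to $-\infty$ linearly, hence \eqref{gp2}--\eqref{gp4} hold $\P$-a.s.

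Next I would define the random variable
\begin{align*}
\xi := \sup_{t_0<-1} e^{\int_{t_0}^{-1}\gamma(s)\,ds}|z(t_0)|^2 + \int_{-\infty}^{-1} e^{\int_\tau^{-1}\gamma(s)\,ds}\,2p(\tau)\,d\tau + C,
\end{align*}
where $C$ absorbs the finitely many supremum terms $\sup_{-1\le t\le 0}(|z(t)|^2+|z(t)|_V^2+|Az(t)|^2)$ coming from \eqref{zzvaz} and Lemma~\ref{dom_A}, together with the discrepancy between $z_\alpha$ and $z$. By \eqref{gp3} the first term is a.s.\ finite, by \eqref{gp4} (applied with $p(\tau)=c|f|^2+c\alpha|z(\tau)|^2+\eta|z(\tau)|^2\sum_l|z_l(\tau)|$, which is dominated by the integrand there since $|f|$ is fixed) the second is a.s.\ finite, and the third is a.s.\ finite by \eqref{zzvaz}. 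For the first assertion \eqref{vats}: for $s\le -1$ and $t\in[-1,0]$ split the interval as $[s,-1]\cup[-1,t]$; on $[s,-1]$ apply \eqref{ineq} to get $|v_\alpha(-1,s)|^2$ bounded by the first two terms of $\xi$ (using $|v_\alpha(s,s)|=|z_\alpha(s)|$ and monotonicity of $e^{\int_{t_0}^{-1}\gamma}$), and on the bounded interval $[-1,0]$ a further application of \eqref{ineq} together with the uniform bounds \eqref{zzvaz} on $z$ over $[-1,0]$ controls the remaining growth, all of it independent of $s$. For \eqref{vatsi}: integrate \eqref{dineq} over $[-1,0]$, drop the $\frac12\frac{d^+}{dt}|v|^2$ term after integrating (using $|v_\alpha(0,s)|^2\ge 0$), which yields $\int_{-1}^0|v_\alpha(r,s)|_V^2\,dr\le (\sup_{-1\le r\le 0}|v_\alpha(r,s)|^2)\int_{-1}^0|\gamma(r)|\,dr+\int_{-1}^0 2p(r)\,dr$; the first factor is $\le\xi$ by \eqref{vats} and the remaining two integrals are over a compact interval, hence finite a.s., so enlarging $\xi$ if necessary gives the claim.

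The main obstacle is the bookkeeping around the shifted process $z_\alpha$: the estimate in Proposition~\ref{est1} is phrased for the genuinely stationary process $z$, whereas \eqref{vats}--\eqref{vatsi} involve $v_\alpha=u-z_\alpha$ with $z_\alpha(t)=z(t)+e^{-(\widehat A+\alpha)(t-s)}(z_\alpha-z(s))-\alpha\int_s^t e^{-(\widehat A+\alpha)(t-\sigma)}z(\sigma)\,d\sigma$, so one must check that the difference $z_\alpha-z$ and its $V$-norm are controlled uniformly in $s\le-1$ and $t\in[-1,0]$; this follows from the contraction $|e^{-(\widehat A+\alpha)(t-s)}|\le e^{-(\lambda_1+\alpha)(t-s)}\le e^{-(\lambda_1+\alpha)\cdot(-1-s)}$ decaying as $s\to-\infty$, boundedness of $z$ on $[-1,0]$, and the integrability \eqref{gp4} of $|z(\sigma)|$ against the exponential weight, but it has to be done carefully so that no constant secretly depends on $s$. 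A secondary point is to make sure the same fixed $\alpha$ works simultaneously in \eqref{z10}, in $p(t)$ (the $c\alpha|z|^2$ term), and in the $z_\alpha$-correction — but since $\alpha$ is chosen once and then frozen, this is only a matter of ordering the argument correctly.
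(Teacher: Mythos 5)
Your proposal is correct and follows essentially the same route as the paper: the Gr\"onwall bound \eqref{ineq} with the initial term and the $p$-integral controlled by the a.s.\ finiteness of the weighted supremum \eqref{gp3} and weighted integral \eqref{gp4}, together with the choice of $\alpha$ via \eqref{z10} and the ergodic theorem, and \eqref{vatsi} obtained by integrating \eqref{dineq} over $[-1,0]$. The only cosmetic difference is that the paper re-derives the needed decay inside the proof via the exponential bound \eqref{expb} combined with the Borel--Cantelli growth estimate of Lemma~\ref{estz}, whereas you invoke the already-established facts \eqref{gp3}--\eqref{gp4} directly (and you are somewhat more explicit about the $z$ versus $z_{\alpha}$ bookkeeping, which the paper glosses over).
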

\begin{proof}
    In view of inequality \ref{ineq}, one obtains
\begin{align}\label{va}
    | v_{\alpha}(t,s)|^2\le |v(s)|^2 e^{\int^{t}_s -\frac{\lambda_1}{2}+4\eta\sum^m_{l=1}|z_l(\xi)|d\xi}+\int^t_s e^{\int^t_r\gamma(\xi)d\xi}2p(r)dr.
\end{align}
Based on the earlier discussion, the first term is finite; The second term is also finite under the assumption (\ref{zzvaz}).

We now use the ergodic properties of $z.$ Since $z_{\alpha}(t)$, $-\infty<t<\infty$, is an ergodic process which is supported by $D(A^{\delta})\subset    \mathbb{L}^4(\mathbb{S}^2)$. Then by the
 Marcinkiewicz strong law of large number, we have $\P$ a.s. that
 and by Prop 8.4 \cite{MR2584982} that
\begin{align*}
    \lim_{s\to-\infty}\frac{1}{-1-s}\int^{-1}_{s}4\eta\sum^m_{l=1}|z_l(\sigma)|d\sigma=4\eta m\E|z_1(0)|<\frac{\lambda_1}{4}.
\end{align*}

The existence and uniqueness of invariant measure for the OU equation driven by L\'evy process is well-known \cite{MR900115}.

Let $\mu_{\alpha}$ be the unique invariant measure of L\'evy type. It is easy to see that
\begin{align*}
    \lim_{\alpha\to\infty}\int_{V}4\eta\sum^m_{l=1}|z_l(s)|\mu_{\alpha}(dz)=0.
\end{align*}
Then for sufficiently large random $s_0>0$ and $s<-s_0$
\begin{align}\label{expb}
    e^{\int^{t}_s -\frac{\lambda_1}{2}+4\eta\sum^m_{l=1}|z_l(\xi)|d\xi}\le e^{-\frac{\lambda_1}{4}(t-s)}.
\end{align}

To complete the proof this proposition we need the following Lemma.
\begin{lem}\label{estz}
    Assume that  $X$ is a stationary process taking values in a Banach space $B$. Moreover, assume that for a certain $p>0$ we have 
    \[\E\sup_{t\in[-1,0]}|X(t)|_B^p<\infty\,.\]
    Then for every $\kappa>0$ such that $\kappa p>1$ there exists a random variable $ \xi$ such that $\P$ a.s.
    \begin{align*}
|X(t)|_B\le \xi+2^\kappa |t|^{\kappa},
    \end{align*}
for all $t\le 0$. 
\end{lem}
\begin{proof} 

Let $\eta_n=\sup_{-n\le s\le -n+1}|X(s)|_B$, $n=0,1,\ldots$ Then by stationarity 
    \begin{align*}
    \E\eta^p_n=\E\eta^p<\infty .   &
    \end{align*}
Therefore,
\begin{align}
    \P(\eta_n\ge n^{\kappa})\le\frac{\E\eta^p_1}{n^{\kappa p}}.
\end{align}

If $\kappa p>1$, then $\sum^{\infty}_{n=1}\P(\eta_n\ge n^{\kappa})<\infty$, and by the Borel Cantelli lemma, $\P$-a.s., for any sufficiently large $n$,
\begin{align*}
    \eta_n\le n^{\kappa}.
\end{align*}
That is, for every $\omega$, there exists  $N(\omega)$ such that 
\begin{align*}
    \eta_n(\omega)\le n^{\kappa},\quad\text{for}\quad n>N(\omega). \end{align*}
    Therefore, for $t\in[-n,-n+1]$ we have 
    \begin{align*}|X(t,\omega)|_B\le \eta_n(\omega)&\le\eta_n(\omega) I_{n\le n(\omega)}+n^\kappa I_{n>N(\omega)}\\
    &\le\eta_n(\omega)I_{n\le N(\omega)}+2^\kappa |t|^\kappa.
    \end{align*}
Since $\P(N<\infty)=1$ the random variable 
\[\xi(\omega)=\max_{n\le N(\omega)}\eta_n(\omega)\]
is finite $\P$-a.s. and the Lemma follows.
\end{proof}
With the aid of this lemma, combine with equations (\ref{vats}), (\ref{va}) and (\ref{expb}). We deduce the claim in Proposition \ref{vaest}. Moreover, via an apriori estimate about $\int^T_0| v(t)|^2_Vdt$, that is (\ref{intineq}) the inequality (\ref{vatsi}) follows.
\end{proof}

\textbf{Step 2} \emph{Measure support.}  We now generalise Proposition \ref{est1} by proving regularizing property of equation (\ref{odeim}) via deducing a priori estimate in $D( A^{\delta})$ for some $\delta>0.$
This allows us to establish support of invariant measure.
\begin{prop}\label{est3}
    For any $\delta\in [0,\frac{1}{2}]$, there exists $C=C(\delta)$ such that for any mild solution $v(\cdot)$ of (\ref{odeim}), one has
\begin{align}\label{est1spt}
    | A^{\delta} v(t)|^2\le e^{C\int^t_0| v(s)|^2| A^{\frac{1}{2}} v(s)|^2ds}|A^{\delta} v(0)|^2+C\int^t_0 e^{C\int^t_{\sigma}| v(s)|^2| A^{\frac{1}{2}} v(s)|^2ds}(| A^{\delta+\frac{1}{2}}f|^2+|z(\sigma)|^2+| A^{\frac{1+2\delta}{4}}z(\sigma)|^4)d\sigma.
\end{align}
\end{prop}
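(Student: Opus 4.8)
The plan is to run an energy estimate for $|A^\delta v(t)|^2$ directly on the evolution equation \eqref{odeim}, treating the nonlinear and forcing terms as perturbations and closing the estimate with a Gronwall inequality. First I would test \eqref{odeim} against $A^{2\delta}v$ in $H$. Using $\tfrac12\tfrac{d^+}{dt}|A^\delta v|^2 = (\tfrac{d^+v}{dt}, A^{2\delta}v)$ and the identity $(Av, A^{2\delta}v) = |A^{\delta+1/2}v|^2$ for the Stokes term, and noting that the Coriolis term contributes a bounded (in fact, on $\mathbb S^2$, manageable) perturbation which can be absorbed — e.g. $|(Cv, A^{2\delta}v)| \le c|A^\delta v|^2 \le \tfrac14 |A^{\delta+1/2}v|^2 + c'|A^\delta v|^2$ after Poincaré/interpolation — I get
\begin{align*}
\frac12\frac{d^+}{dt}|A^\delta v|^2 + |A^{\delta+1/2}v|^2 \le |(B(u,u), A^{2\delta}v)| + |(f, A^{2\delta}v)| + \alpha|(z, A^{2\delta}v)| + c'|A^\delta v|^2.
\end{align*}

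The heart of the matter is bounding the trilinear term $|(B(u,u), A^{2\delta}v)|$ with $u = v+z$. I would split $B(u,u) = B(v,v) + B(v,z) + B(z,v) + B(z,z)$ and estimate each piece using the interpolation/Sobolev inequalities recorded before Proposition \ref{est1} — in particular $|B(w)| \le C|w|_V^{1/2}|w|\,|Aw|^{1/2}$ and its polarized cousin \eqref{buvw0} — together with $|A^{2\delta}v| \le |A^{\delta+1/2}v|^{4\delta}\,(\text{lower order})$ type interpolation, so that the top-order factor $|A^{\delta+1/2}v|^2$ can be split off by Young's inequality and absorbed into the left-hand side. The $B(v,v)$ term is the one that produces the characteristic coefficient: one arranges
\begin{align*}
|(B(v,v), A^{2\delta}v)| \le \tfrac12|A^{\delta+1/2}v|^2 + C\,|v|^2\,|A^{1/2}v|^2\,|A^\delta v|^2,
\end{align*}
which is exactly the shape needed to make $C\int_0^t |v(s)|^2|A^{1/2}v(s)|^2\,ds$ appear in the exponent. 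The mixed terms involving $z$ are lower order because $z$ lives in $D(A)$ by \eqref{zzvaz} and Lemma \ref{dom_A}, so they can be bounded by $C(|A^\delta v|^2 + |z|^2 + |A^{(1+2\delta)/4}z|^4)$ after Young, plus a small multiple of $|A^{\delta+1/2}v|^2$; the pure forcing terms give $|A^{\delta+1/2}f|^2$ after absorbing $A^{2\delta-1/2}$ onto $f$ via integration by parts, i.e. $(f, A^{2\delta}v) = (A^{\delta+1/2}f, A^{\delta-1/2}v)$ — here one uses $\delta \le \tfrac12$ so the negative power of $A$ on $v$ is harmless.

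Collecting everything, I reach a differential inequality of the form
\begin{align*}
\frac{d^+}{dt}|A^\delta v|^2 \le \big(C|v|^2|A^{1/2}v|^2 + c'\big)|A^\delta v|^2 + C\big(|A^{\delta+1/2}f|^2 + |z|^2 + |A^{(1+2\delta)/4}z|^4\big),
\end{align*}
and the claimed bound \eqref{est1spt} follows by the integrating-factor (Gronwall) formula, with the constant $c'$ absorbed into $C$. The main obstacle I anticipate is purely technical bookkeeping in the nonlinear estimate: making sure every application of the interpolation inequalities lands the top-order derivative $|A^{\delta+1/2}v|$ with a coefficient small enough to be absorbed, uniformly over $\delta \in [0,\tfrac12]$, and checking that on the sphere the Coriolis term $\mathbf{C}v$ and the commutator between $A$ and $\widehat A = \nu A + \mathbf{C}$ do not spoil the clean $(Av, A^{2\delta}v) = |A^{\delta+1/2}v|^2$ identity — since $D(A) = D(\widehat A)$ this is an equivalence-of-norms issue rather than a genuine difficulty, but it must be handled. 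A secondary subtlety is that $v$ is only a mild solution with $\tfrac{d^+}{dt}$ a right derivative, so the energy identity should be justified as in the proof of Proposition \ref{est1} (or by working with the regularized solutions from Theorem \ref{t4} and passing to the limit).
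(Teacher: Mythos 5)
Your overall strategy coincides with the paper's: test \eqref{odeim} against $A^{2\delta}v$, use $\tfrac12\partial_t|A^{\delta}v|^2=(\partial_t v,A^{2\delta}v)$ and $(Av,A^{2\delta}v)=|A^{\delta+1/2}v|^2$, absorb the top-order contributions of the forcing term, the $\alpha z$ term and the trilinear form by Young's inequality, extract the coefficient $|v|^2|A^{1/2}v|^2$ from the nonlinearity, and conclude by Gronwall. For the trilinear term the paper simply quotes the estimate behind (15.4.12) of Da Prato--Zabczyk, namely $|b(v+z,v+z,A^{2\delta}v)|\le\tfrac{\nu}{6}|A^{\delta+1/2}v|^2+K(\nu)\bigl(|v|^2|A^{1/2}v|^2+|A^{(1+2\delta)/4}z|^4\bigr)$, rather than rederiving it from the fourfold splitting of $B(u,u)$ as you propose; your version, which keeps the factor $|A^{\delta}v|^2$ on the Gronwall coefficient, is in fact the shape actually needed for the exponential in \eqref{est1spt} to appear (the paper's displayed inequality drops that factor, apparently a slip).

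The one place your argument does not close as written is the Coriolis term. The paper does not treat $(\mathbf{C}v,A^{2\delta}v)$ as a perturbation: by Lemma 2.4 of \cite{snse1} it vanishes identically, since on the sphere $\mathbf{C}$ commutes with $A$ (both are diagonal in the vector spherical harmonic basis) and is antisymmetric, so $(\mathbf{C}v,A^{2\delta}v)=(\mathbf{C}A^{\delta}v,A^{\delta}v)=0$. Your bound $|(\mathbf{C}v,A^{2\delta}v)|\le c'|A^{\delta}v|^2$ leaves a residual linear term whose Gronwall contribution is a factor $e^{c't}$, and this cannot be ``absorbed into $C$'' inside the exponent $C\int_0^t|v|^2|A^{1/2}v|^2\,ds$, which may be arbitrarily small; absorbing it instead into the dissipation via Poincar\'e would require $c'<\lambda_1$, i.e.\ a smallness condition on the rotation rate that is not assumed. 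So either invoke the exact cancellation, or accept a strictly weaker estimate carrying an extra $e^{c't}$ (harmless for the eventual application on intervals of length one, but not the statement as written). The remaining points you flag --- the $(f,A^{2\delta}v)=(A^{\delta+1/2}f,A^{\delta-1/2}v)$ manipulation using $\delta\le\tfrac12$, and the justification of the energy identity for mild solutions via the regularity of Theorem \ref{t4} --- are consistent with what the paper does.
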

\begin{proof}
    Multiply (\ref{odeim}) by $ A^{2\delta} v$ and integrating over $\mathbb{S}^2$, one finds that
\begin{align}\label{eq2}
&\    \frac{1}{2}\partial_t| A^{\delta} v(t)|^2+| A^{\frac{1}{2}+\delta} v(t)|^2+(\mathbf{C} v(t), A^{2\delta} v(t))\notag\\
=&\ -b( v(t)+ z_{\alpha}(t), v(t)+ z_{\alpha}(t), A^{2\delta} v(t))+\alpha( A^{\delta} z_{\alpha}(t), A^{\delta} v(t))+\langle  A^{\delta}f, A^{\delta} v(t)).
\end{align}
From Lemma 2.4 in \cite{snse1} it is clear that
\begin{align*}
    ( \mathbf{C} v,  A^{2\delta} v)=0.
\end{align*}

To complete the proof we need to estimate the terms  $b(v+z,v+z,A^{2\delta}v)$, $\alpha\langle A^{2\delta}v, z\rangle$, $\langle A^{2\delta}v, f\rangle$

Using Young inequality with $ab=\sqrt{\frac{\nu}{10}}a \sqrt{\frac{10}{\nu}}b,\,p=2$, we have

\begin{align*}
    \alpha|\langle A^{2\delta}v, z\rangle|\le\frac{\nu}{6}|A^{\delta+\frac{1}{2}}v|^2+\frac{3\alpha^2}{2\nu}|z|^2
\end{align*}

\begin{align*}
    |\langle A^{2\delta}v, f\rangle|\le\frac{\nu}{6}|A^{\delta+\frac{1}{2}}v|^2+\frac{3}{2\nu}|f|^2
\end{align*}

Finally, following the method of deriving (15.4.12) as in \cite{MR1417491}, one can show that, for any $\nu>0$, there exists a $K(\nu)$ such that
\begin{align}\label{bvzadelv}
|\langle A^{2\delta}v,B( v+ z, v+ z)\rangle|&=|b( v+ z, v+ z, A^{2\delta} v)|\\
&\le \frac{\nu}{6} | A^{\delta+\frac{1}{2}} v|^2+K(\nu)(| v|^2| A^{1/2} v|^2+| A^{\frac{1+2\delta}{4}} z|^4).
\end{align}
Combing the above estimates, we have
\begin{align*}
&\ \frac{1}{2}\partial_t| A^{\delta} v(t)|^2+(1-3\frac{\nu}{6})|A^{\frac{1}{2}+\delta} v(t)|^2\\
\le &\ K(\nu)|v(t)|^2| A^{\frac{1}{2}} v(t)|^2+K(\nu)|A^{\frac{1+2\delta}{4}} z(t)|^4+\frac{3\alpha^2}{2\nu}| z(t)|^2+\frac{3}{2\nu}|f|^2.
\end{align*}
Therefore, invoking Gronwall, it follows that
\begin{align}
    | A^{\delta} v(t)|^2&\le e^{K(\nu)\int^t_0| v(s)|^2| A^{\frac{1}{2}} v(s)|^2 ds}| A^{\delta} v(0)|^2\\&+\int^t_0 e^{K(\nu)\int^t_{\sigma}| v(s)|^2| A^{\frac{1}{2}} v(s)|^2ds}\left(\frac{3\alpha^2}{2\nu}|z|^2+K(\nu)|A^{\frac{1+2\delta}{4}}|^4+\frac{3}{2\nu}|f|^2\right)d\sigma.
\end{align}
\end{proof}
To complete the proof of invariant measure. It follows from Proposition \ref{est3} that for any $t\le -1\le r\le 0$,
\begin{align*}
&\ | A^{\delta} v_{\alpha}(0,t)|^2\\
=&\ e^{K(\nu)\int^0_r| v_{\alpha}(s,t)|^2| A^{\frac{1}{2}} v_{\alpha}(s,t)|^2 ds}| A^{\delta}v_{\alpha}(r,t)|^2\\&+\int^0_re^{K(\nu)\int^0_{\sigma}|v_{\alpha}(s,t)|^2| A^{\frac{1}{2}}v_{\alpha}(s,t)|^2 ds}(\frac{3\alpha^2}{\nu}|z|^2+K(\nu)|A^{\frac{1+2\delta}{4}}|^4+\frac{3}{2\nu}|f|^2)d\sigma\\
\le &\ e^{K(\nu)[\sup_{-1\le s\le 0}| v_{\alpha}(s,t)|^2]\int^0_{-1}| A^{\frac{1}{2}} v_{\alpha}(s,t)|^2 ds}\times\left[| A^{\delta} v_{\alpha}(r,t)|^2+\frac{3\alpha^2}{2\nu}|z|^2+K(\nu)|A^{\frac{1+2\delta}{4}}|^4+\frac{3}{\nu}|f|^2d\sigma\right].
\end{align*}
Consequently, integrating the above over the interval $[-1,0]$, one gets for $t\le -1$ that
\begin{align*}
&\    | A^{\delta} v_{\alpha}(0,t)|^2\\
    \le &\ e^{K(\nu)[\sup_{-1\le s\le 0}| v_{\alpha}(s,t)|^2]\int^0_{-1}| A^{\frac{1}{2}} v_{\alpha}(s,t)|^2 ds}\\&\times\left[| A^{\frac{1}{2}} v_{\alpha}(r,t)|+\frac{3\alpha^2}{2\nu}|z|^2+K(\nu)|A^{\frac{1+2\delta}{4}}|^4+\frac{3}{2\nu}|f|^2d\sigma\right].
\end{align*}
By Proposition \ref{vaest} there exists a random variable $\eta$ such that $\P$ a.s.
\begin{align}
    | A^{\delta} v_{\alpha}(0,t)|\le  \xi,\quad\forall\,\,t\le -1.
\end{align}
Moreover,
\begin{align*}
    | A^{\delta} u(0,t)|\le | A^{\delta} v_{\alpha}(0,t)|+| A^{\delta} z_{\alpha}(0)|.
\end{align*}
Since $ z_{\alpha}(0)$ takes value in $D( A^{\delta})$ there exists another random variable $\zeta$ such that $\P$ a.s.
\begin{align}\label{adelu}
    | A^{\delta} u(0,t)|\le\zeta\quad\forall\,\,t\le -1.
\end{align}
So $u(0,t)$ is bounded in probability in the space $D( A^{\delta})$ for some $\delta>0$ satisfies $\sum_{l\ge 1}|\sigma_l|^{\beta}\lambda^{\beta\delta}_l<\infty$:
\begin{align*} \forall\,\,\epsilon>0\,\,\exists\,\,R>0\,\,\forall\,\,t\ge 0\quad\P(| u(0,t,u_0)|\ge R)<\epsilon.
\end{align*}
Now Let $ u_0$ be fixed and let $\nu_{t, u_0}$ be the law of $u(t, u_0).$ Set
\begin{align*}
  \mu_T=\frac{1}{T}\int^T_0\nu_{t, u_0}dt.
\end{align*}
Let $B_R=\{ x\in D( A^{\delta});| A^\delta x|\le R\}$, equation (\ref{adelu}) implies for $p\in(1,\beta)$
\begin{align*}
    \mu_T(B^c_R)&\le \frac{1}{TR^p}\int^T_0\E|A^{\delta} u(0,t, u_0)|^p\,dt\\
    &\le\frac{1}{TR^p}T\mathbb E\zeta^p=\frac{\mathbb E\zeta^p}{R^p}.
\end{align*}
We have that, for any $\epsilon>0$, $\mu_T(B_R)=1-\epsilon$ for sufficient large $R$. Hence $\mu_T$ is tight and its limit is an invariant probability measure of the solution $u$ of equation (\ref{asnse4}), by Corollary \ref{kbcor}.
Moreover, the support of the invariant measure is in $D(A^{1/2})$. 

Combine with the markov feller properties proved for $u$ earlier, the solution $u$ to equation (\ref{asnse4}) admits at least one invariant measure and is supported in $D(A^{1/2})$. Hence, Theorem \ref{existenceim} is proved.
\begin{rmk}
	To prove uniqueness of invariant measure, one needs irreducibility and strong feller properties of solution semigroup. However, there is a trade-off between well-posedness of the SNSE and the strong Feller property (see the publication \cite{Dong:2012uq}).
\end{rmk}
\bibliographystyle{plain}
\bibliography{l'sthesis}
\end{document}